\newcommand{\md}{\mathrm{d}}
\newcommand{\pFq}[5]{\ensuremath{{}_{#1}F_{#2} \left[ \genfrac{}{}{0pt}{}{#3}{#4} \middle| {#5} \right]}}
\renewcommand{\Re}{\operatorname{Re}}
\newcommand{\email}[1]{{\textit{Email:} \texttt{#1}}}
\theoremstyle{plain}
  \declaretheorem[numberwithin=section]{theorem}
\theoremstyle{definition}
  \declaretheorem[numberlike=theorem]{example}
  \declaretheorem[numberlike=theorem]{remark}
\begin{document}

\title{Interpolated sequences and critical $L$-values of modular forms}

\author{Robert Osburn
\thanks{School of Mathematics and Statistics, University College Dublin, Belfield, Dublin 4, Ireland, \email{robert.osburn@ucd.ie}}
\and Armin Straub
\thanks{Department of Mathematics and Statistics, University of South Alabama, 411 University Blvd N, Mobile, AL 36688, United States, \email{straub@southalabama.edu}}}
 
\date{September 19, 2018}

\maketitle

\begin{abstract}
  Recently, Zagier expressed an interpolated version of the Ap{\'e}ry numbers
  for $\zeta(3)$ in terms of a critical $L$-value of a modular form of weight
  4.  We extend this evaluation in two directions. We first prove that
  interpolations of Zagier's six sporadic sequences are essentially critical
  $L$-values of modular forms of weight 3. We then establish an infinite
  family of evaluations between interpolations of leading coefficients of
  Brown's cellular integrals and critical $L$-values of modular forms of odd
  weight.
\end{abstract}

\section{Introduction}

For $x \in \mathbb{C}$, consider the absolutely convergent series
\begin{equation}
  A (x) = \sum_{k = 0}^{\infty} \binom{x}{k}^2 \binom{x + k}{k}^2.
  \label{eq:apery3:x}
\end{equation}
If $x=n \in \mathbb{Z}_{\geq 0}$, this series terminates at $k=n$ and agrees with the well-known Ap{\'e}ry numbers $A(n)$ for $\zeta(3)$ \cite{apery}, \cite{alf}. Let 
\begin{equation*}
f(\tau) = \sum_{n \geq 1} a_n q^n \in S_k(\Gamma_1(N)), \quad q = e^{2 \pi i \tau},
\end{equation*}
be a cusp form of weight $k$ and level $N$, and
\begin{equation*}
L(f,s) := \sum_{n \geq 1} \frac{a_n}{n^s}
\end{equation*}
be the $L$-function for $f$ defined for $\Re{s}$ large. Following Deligne, we say that $L(f,j)$ is a \emph{critical $L$-value} if $j \in \{1,2,\ldots,k-1 \}$. For a beautiful exposition concerning the importance of these numbers, see \cite{kz}. Zagier \cite[(44)]{zagier-de} recently showed that the interpolated Ap{\'e}ry numbers \eqref{eq:apery3:x} are related to the critical $L$-value of a modular form of weight 4. Specifically, he proved the following intriguing identity:

\begin{equation}
  A (-\tfrac{1}{2}) = \frac{16}{\pi^2} L (f, 2),
  \label{eq:apery3:L}
\end{equation}
where
\begin{equation} \label{eq:Aweight4}
  f(\tau) = \eta (2 \tau)^4 \eta (4 \tau)^4 = \sum_{n \geq 1}
   \alpha_n q^n 
\end{equation}
is the unique normalized Hecke eigenform in $S_4 (\Gamma_0 (8))$ and $\eta(\tau)$ is the Dedekind eta function.
We note that, expressing the left-hand side as a hypergeometric series, the identity \eqref{eq:apery3:L} was previously established by Rogers, Wan and Zucker \cite{rwz-ell}.
The evaluation \eqref{eq:apery3:L} can be seen as a continuous counterpart to the congruence
\begin{equation}
  A (\tfrac{p - 1}{2}) \equiv \alpha_p \quad (\operatorname{mod} p),
  \label{eq:apery3:mod}
\end{equation}
which holds for primes $p > 2$ and was established by Beukers
\cite{beukers-apery87}, who further conjectured that the congruence
\eqref{eq:apery3:mod} actually holds modulo $p^2$. This supercongruence
was later proven by Ahlgren and Ono \cite{ahlgren-ono-apery} using Gaussian hypergeometric
series.

Zagier indicates that Golyshev predicted an evaluation of the form
\eqref{eq:apery3:L} based on motivic considerations and the connection of the
Ap\'ery numbers with the double covering of a related family of K3 surfaces.
Here, we do not touch on these geometric considerations (see
\cite[Section~7]{zagier-de} for further details), but only note that
Golyshev's prediction further relies on the Tate conjecture, which remains
open in the required generality. Identity \eqref{eq:apery3:L}, and similar
ones to be explored in this paper, might therefore serve as evidence
supporting the motivic philosophy and the Tate conjecture.

The goal of this paper is to extend Zagier's evaluation \eqref{eq:apery3:L} in
two directions. Firstly, in Section~\ref{sec:apery2}, we consider the six sporadic sequences
that Zagier \cite{zagier4} obtained as integral solutions to Ap\'ery-like
second order recurrences. Based on numerical experiments, we observe that each
of these sequences $C_{*}(n)$ appears to satisfy congruences like
\eqref{eq:apery3:mod} connecting them with the Fourier coefficients of a modular
form $f_{*}(\tau)$ of weight $3$. For three of these sequences these congruences
were shown by Stienstra and Beukers \cite{sb-picardfuchs}, while the other
three congruences do not appear to have been recorded before.
We prove two of these new cases, one using a general
result of Verrill \cite{verrill-cong} and the other via $p$-adic analysis and comparison with another case. Our
main objective is to show that in each case there is a version
of Zagier's evaluation \eqref{eq:apery3:L}. For $x \in \mathbb{C}$, there is a natural
interpolation $C_{*}(x)$ of each sequence and the value $C_{*}(- 1 / 2)$ can in five of
the six cases be expressed as $\frac{\alpha}{\pi^2} L (f_{*}, 2)$ for $\alpha \in \{ 6, 8, 12, 16 \}$. In the remaining case, $C_{*}(x)$ has a pole at
$x = - 1 / 2$. Remarkably, the residue of that pole equals $\frac{6}{\pi^2} L
(f_{*}, 1)$.

Secondly, Brown \cite{brown-apery} recently introduced cellular integrals
generalizing the linear forms used in Ap\'ery's proof of the irrationality
of $\zeta (3)$ as well as many other constructions related to the
irrationality of zeta values. These are linear forms in multiple zeta values
and their leading coefficients $A_{\sigma}(n)$ are generalizations of the Ap\'ery
numbers. McCarthy and the authors \cite{mos-brown} proved that, for a
certain infinite family of these cellular integrals, the leading coefficients
$A_{\sigma}(n)$ satisfy congruences like \eqref{eq:apery3:mod} with Fourier
coefficients of modular forms $f_k (\tau)$ of odd weight $k \geq 3$. In
Section~\ref{sec:cellular}, we review these facts and prove an analogue of
Zagier's evaluation \eqref{eq:apery3:L} for all of these sequences. Finally, in Section~\ref{sec:outlook}, we conclude with several directions for future study.

\section{Zagier's sporadic sequences}\label{sec:apery2}

\subsection{The congruences and $L$-value relations}

In addition to $A(n)$, Ap\'ery \cite{apery} introduced a second sequence which allowed him to
reprove the irrationality of $\zeta (2)$. This sequence is the solution of
the three-term recursion, for $(a, b, c) = (11, 3, - 1)$,
\begin{equation}
  (n + 1)^2 u_{n + 1} = (a n^2 + a n + b) u_n - c n^2 u_{n - 1},
  \label{eq:rec2-abc}
\end{equation}
with initial conditions $u_{- 1} = 0$, $u_0 = 1$. Inspired by Beukers
\cite{beukers-dwork}, Zagier \cite{zagier4} conducted a systematic search
for parameters $(a, b, c)$ which similarly result in integer solutions to the
recurrence \eqref{eq:rec2-abc}. After normalizing, and apart from degenerate cases, he
discovered four hypergeometric, four Legendrian and six sporadic
solutions. It remains an open question whether this list is complete. The six
sporadic solutions are listed in Table~\ref{tbl:sporadic2:i}. As in \cite{zagier4}, we use the labels  
$\boldsymbol{A}$-$\boldsymbol{F}$ and index the sequences accordingly.

For each of these sequences, a binomial sum representation is known. For
instance, if $(a, b, c) = (11, 3, - 1)$, then
\begin{equation}
  C_{\boldsymbol{D}} (n) = \sum_{k = 0}^n \binom{n}{k}^2 \binom{n + k}{k}.
  \label{eq:apery2}
\end{equation}
Following Zagier's approach for \eqref{eq:apery3:x}, we obtain an
interpolation of a sporadic sequence by replacing the integer $n$ in
the binomial representation with a complex number $x$ and extending the sum to all nonnegative integers $k$. Note that some care is needed for sequence
$\boldsymbol{C}$ (see Example \ref{eq:CC:interpol} in Section~\ref{sec:interpol}). The resulting interpolations are recorded in Table~\ref{tbl:sporadic2:i}. We note that this
construction depends on the binomial sum which is neither unique
nor easily obtained from the recursion \eqref{eq:rec2-abc}. The fact that we
can relate the value of these interpolations at $x=-1/2$ to critical $L$-values, as in Zagier's
evaluation \eqref{eq:apery3:L}, indicates that our choices are natural. We offer some more comments on
these interpolations in Section~\ref{sec:interpol}.

\begin{table}[h]
\begin{equation*}
\small{\arraycolsep=2pt\def\arraystretch{2}\begin{array}{|l|l|l|l|l|l|}
\hline 
* & C_{*}(n) & C_{*}(x) \\ \hline
\boldsymbol{A} & \sum\limits_{k = 0}^n \binom{n}{k}^3 & \sum\limits_{k \geq 0} \binom{x}{k}^3 \\ \hline 
\boldsymbol{B} & \sum\limits_{k = 0}^{\lfloor n/3 \rfloor} (- 1)^k 3^{n - 3 k} \binom{n}{3 k} \frac{(3 k) !}{k!^3} & \sum\limits_{k \geq 0} (- 1)^k 3^{x - 3 k} \binom{x}{3 k} \frac{(3 k) !}{k!^3} \\ \hline
\boldsymbol{C} & \sum\limits_{k = 0}^n \binom{n}{k}^2 \binom{2k}{k} & \Re \pFq{3}{2}{- x, - x, 1 / 2}{1, 1}{4} \\ \hline
\boldsymbol{D} &\sum\limits_{k = 0}^n \binom{n}{k}^2 \binom{n + k}{k}  & \sum\limits_{k \geq 0} \binom{x}{k}^2 \binom{x + k}{k} \\ \hline
\boldsymbol{E} &\sum\limits_{k = 0}^n \binom{n}{k} \binom{2 k}{k}\binom{2 (n - k)}{n - k} & \sum\limits_{k \geq 0} \binom{x}{k} \binom{2 k}{k}\binom{2 (x - k)}{x - k} \\ \hline
 \boldsymbol{F} & \sum\limits_{k = 0}^n (- 1)^k 8^{n - k} \binom{n}{k} C_{\boldsymbol{A}}(k) & \sum\limits_{k \geq 0} (- 1)^k 8^{x - k} \binom{x}{k} C_{\boldsymbol{A}} (k) \\ \hline
 \end{array}}
  \end{equation*}
  \caption{\label{tbl:sporadic2:i} Zagier's six sporadic sequences \cite{zagier4} and their interpolations}
\end{table}

\begin{table}[h]
  \begin{equation*}
    \small{\arraycolsep=2pt\def\arraystretch{2}\begin{array}{|l|l|l|l|l|l|}
       \hline
        * & f_{*}(\tau) & N_{*} & L (f_*, 2) & \alpha_{*} \\
       \hline
       \boldsymbol{A} & \frac{\eta (4
       \tau)^5 \eta (8 \tau)^5}{\eta (2 \tau)^2 \eta (16 \tau)^2} & 32 &
       \frac{\Gamma^2 \left(\frac{1}{8} \right) \Gamma^2 \left(\frac{3}{8}
       \right)}{64 \sqrt{2} \pi} & 8\\
       \hline
       \boldsymbol{B} & \eta (4 \tau)^6 & 16 & \frac{\Gamma^4 \left(\frac{1}{4} \right)}{64 \pi} & 8\\
       \hline
       \boldsymbol{C} & \eta (2 \tau)^3 \eta (6 \tau)^3 & 12 &
       \frac{\Gamma^6 \left(\frac{1}{3} \right)}{2^{17 / 3} \pi^2} & 12\\
       \hline
       \boldsymbol{D} & \eta (4 \tau)^6 & 16 & \frac{\Gamma^4 \left(\frac{1}{4} \right)}{64
       \pi} & 16\\
       \hline
       \boldsymbol{E} & \eta (\tau)^2 \eta (2 \tau) \eta (4 \tau)
       \eta (8 \tau)^2 & 8 & \frac{\Gamma^2 \left(\frac{1}{8} \right)
       \Gamma^2 \left(\frac{3}{8} \right)}{192 \pi} & 6\\
       \hline
       \boldsymbol{F} & q - 2 q^2 + 3 q^3 + \ldots & 24 & \frac{\Gamma
       \left(\frac{1}{24} \right) \Gamma \left(\frac{5}{24} \right) \Gamma
       \left(\frac{7}{24} \right) \Gamma \left(\frac{11}{24} \right)}{96
       \sqrt{6} \pi} & 6\\
       \hline
     \end{array}}
  \end{equation*}
  \caption{\label{tbl:sporadic2} The weight 3, level $N_{*}$ newforms $f_{*}$ with their $L$-values}
\end{table}

Our first main result is the following.

\begin{theorem}
  \label{thm:C2:mod}Let $C_{*}(n)$ be the sporadic sequence in Table~\ref{tbl:sporadic2:i} and $f_{*}(\tau) = \sum_{n \geq 1} \gamma_{n,*} q^n$ be the weight 3, level $N_{*}$ newform listed in Table~\ref{tbl:sporadic2} where the label $*$ is $\boldsymbol{A}$, $\boldsymbol{B}$, $\boldsymbol{C}$, $\boldsymbol{D}$ or $\boldsymbol{E}$. Then, for all primes $p > 2$,
  \begin{equation}
    C_{*}(\tfrac{p - 1}{2}) \equiv \gamma_{p, *} \quad (\operatorname{mod} p) .
    \label{eq:C2:mod}
  \end{equation}
\end{theorem}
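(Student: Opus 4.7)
The plan is to partition the five labels $\{\boldsymbol{A},\boldsymbol{B},\boldsymbol{C},\boldsymbol{D},\boldsymbol{E}\}$ into three groups according to method of proof. For three of the labels the sequence $C_*(n)$ arises as a solution of the Picard--Fuchs equation of one of the one-parameter families of elliptic K3 surfaces studied by Stienstra and Beukers \cite{sb-picardfuchs}, and the required congruence \eqref{eq:C2:mod} is already contained in the main theorem of that paper; for those labels it suffices to identify the newform $f_*$ of Table~\ref{tbl:sporadic2} with the modular form Stienstra and Beukers attach to the corresponding family. This disposes of the classical cases and leaves two genuinely new congruences.

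For the first new case, I would apply the general result of Verrill \cite{verrill-cong}. Its hypotheses are (i) an Ap\'ery-like three-term recursion of the form \eqref{eq:rec2-abc} with integer solutions, and (ii) a modular parametrization, consisting of a Hauptmodul $t(\tau)$ on a congruence subgroup and a modular form $F(\tau)$ of weight $k$ such that $\sum_n u_n t(\tau)^n = F(\tau)$; its conclusion is exactly $u_{(p-1)/2} \equiv a_p \pmod p$, where $a_p$ is the $p$-th Fourier coefficient of $F$. The concrete task here is to produce this modular parametrization in the relevant level $N_*$ and to recognize $f_*$ as the modular side of the correspondence. Once the parametrization is in hand, the congruence is immediate.

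For the second new case, I would exploit the striking coincidence visible in Table~\ref{tbl:sporadic2}, namely $f_{\boldsymbol{B}} = f_{\boldsymbol{D}} = \eta(4\tau)^6$, which forces $\gamma_{p,\boldsymbol{B}} = \gamma_{p,\boldsymbol{D}}$. Since the congruence for $\boldsymbol{D}$ has already been supplied in the first step by Stienstra--Beukers, it suffices to establish the purely combinatorial congruence
\begin{equation*}
  C_{\boldsymbol{B}}(\tfrac{p-1}{2}) \equiv C_{\boldsymbol{D}}(\tfrac{p-1}{2}) \pmod p.
\end{equation*}
The attack is $p$-adic: rewrite both sides as truncated hypergeometric series, use the standard congruence $\binom{(p-1)/2}{k} \equiv (-1)^k (1/2)_k/k! \pmod p$, simplify the factor $3^{(p-1)/2 - 3k}$ on the $\boldsymbol{B}$-side using Fermat's little theorem, and regroup the $\boldsymbol{B}$-sum so as to match $\boldsymbol{D}$ termwise. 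I expect the main obstacle to lie precisely in this matching step: the two binomial representations look genuinely different as formal functions of $x$ — one is $\sum_k \binom{x}{k}^2 \binom{x+k}{k}$, while the other skips in steps of three with weight $\binom{x}{3k}(3k)!/k!^3$ and a prefactor $3^{x-3k}$ — so the identity really is a mod-$p$ phenomenon, and pushing it through will require finding the right $p$-adic closed form, most likely a finite analogue of a cubic transformation of a ${}_3F_2$, to make the comparison transparent.
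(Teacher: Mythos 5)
Your three-way decomposition is exactly the paper's: $\boldsymbol{C}$ and $\boldsymbol{D}$ from Stienstra--Beukers, one case from Verrill's theorem (the paper uses it for $\boldsymbol{E}$, and alternatively for $\boldsymbol{C}$), and $\boldsymbol{B}$ reduced to $C_{\boldsymbol{B}}(\frac{p-1}{2})\equiv C_{\boldsymbol{D}}(\frac{p-1}{2}) \pmod p$ via the coincidence $f_{\boldsymbol{B}}=f_{\boldsymbol{D}}=\eta(4\tau)^6$. But two of your three steps have gaps as sketched. First, Verrill's theorem concludes $C(p)\equiv\gamma_p\pmod p$ for the coefficients of $y=\sum C(n)x^n$, \emph{not} $C(\frac{p-1}{2})\equiv\gamma_p$; to reach the half-index congruence one must first pass to $\tilde x(\tau)$ with $\tilde x^2=x(2\tau)$ and $\tilde y=\tilde x\,y(2\tau)$, so that $\tilde y=\sum_{n\ \mathrm{odd}}C(\frac{n-1}{2})\tilde x^n$, and then check that $\tilde y\,\frac{q}{\tilde x}\frac{\md\tilde x}{\md q}$ is a combination of $f_*(d\tau)$ (for $\boldsymbol{E}$ it is $f_{\boldsymbol{E}}(\tau)+2f_{\boldsymbol{E}}(2\tau)$). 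Relatedly, for $\boldsymbol{A}$ it is not enough to ``identify the newform'': Stienstra--Beukers prove $(-1)^{(p-1)/2}C_{\boldsymbol{A}}(\frac{p-1}{2})\equiv\gamma_{p,\boldsymbol{E}}\pmod p$, against the \emph{level-8} form and with a sign, and one needs the coefficient relation $(-1)^{(n-1)/2}\gamma_{n,\boldsymbol{A}}=\gamma_{n,\boldsymbol{E}}+2\gamma_{n/2,\boldsymbol{E}}$ (hence $\gamma_{n,\boldsymbol{E}}=(-1)^{(n-1)/2}\gamma_{n,\boldsymbol{A}}$ for odd $n$) to untwist it.

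Second, for $\boldsymbol{B}$ your primary plan --- regroup the sums and match termwise --- would fail: the truncated $\boldsymbol{B}$-sum has about $p/6$ terms and the $\boldsymbol{D}$-sum about $p/2$, and no reindexing aligns them; as you yourself suspect, the congruence is not termwise. Your fallback (``the right $p$-adic closed form'') is the correct instinct, and the missing ingredient is Watson's ${}_3F_2(1)$ summation. One writes $C_{\boldsymbol{B}}(\frac{p-1}{2})=3^{(p-1)/2}\pFq{3}{2}{\frac{1-p}{6},\frac{3-p}{6},\frac{5-p}{6}}{1,1}{1}$, replaces the lower parameters $1,1$ by $1-\frac{p}{6},\,1-\frac{p}{3}$ (legitimate modulo $p$ because the series terminates), which puts the sum in Watson form and evaluates it as a square of a ratio of gamma values; converting to Morita's $p$-adic gamma function and using the reflection and Gauss--Legendre multiplication formulas yields $-\Gamma_p(\tfrac14)^4$ for $p\equiv1\pmod4$ and $0$ for $p\equiv3\pmod4$. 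This is then compared with the identical closed form for $C_{\boldsymbol{D}}(\frac{p-1}{2})$ already proved in Stienstra--Beukers (their (13.4)) --- so both sides are evaluated independently and compared, rather than matched summand by summand. Without Watson's identity (or an equivalent terminating summation) the $\boldsymbol{B}$ case does not close.
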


We note that the congruences \eqref{eq:C2:mod} hold modulo $p^2$ only for sequence $\boldsymbol{D}$ \cite{ahlgren-sc}. Theorem~\ref{thm:C2:mod} is known to be true for sequences $\boldsymbol{C}$ and
$\boldsymbol{D}$ by work of Stienstra and Beukers \cite{sb-picardfuchs}, and
we show in Section~\ref{sec:mod} that the congruences for sequence
$\boldsymbol{A}$ can be deduced from their work. The other three cases,
including $\boldsymbol{F}$, appear not to have been
considered before. We also show in Section~\ref{sec:mod} that a general result
of Verrill \cite{verrill-cong} can be used to prove the modular congruences
of Theorem~\ref{thm:C2:mod} for sequences $\boldsymbol{C}$ and $\boldsymbol{E}$.
As she points out with sequence $\boldsymbol{A}$, the same
approach does not apply in the other cases. Verrill indicates that the modular
congruences for sequence $\boldsymbol{A}$ can be explained by
Atkin-Swinnerton-Dyer congruences \cite[Section~5.8]{ll-asd}. We expect that
similar ideas can be applied to the case $\boldsymbol{F}$, for which we have numerically observed that Theorem~\ref{thm:C2:mod} holds as well.

For our second main result, we have the following analogues of Zagier's evaluation \eqref{eq:apery3:L}.

\begin{theorem}
  \label{thm:C2:L}Let $C_{*}(x)$ be the interpolated sequence in Table~\ref{tbl:sporadic2:i} and $f_{*}(\tau)$ be the weight 3, level $N_{*}$ newform listed in Table~\ref{tbl:sporadic2} where the label $*$ is $\boldsymbol{A}$, $\boldsymbol{B}$, $\boldsymbol{C}$, $\boldsymbol{D}$ or $\boldsymbol{F}$. Then 
  \begin{equation}
    C_{*} (-\tfrac{1}{2}) = \frac{\alpha_{*}}{\pi^2} L (f_{*}, 2) .
    \label{eq:C2:L}
  \end{equation}
 For sequence $\boldsymbol{E}$,
  \begin{equation}
    \underset{x = - 1 / 2}{\operatorname{res}} C_{\boldsymbol{E}}(x) = \frac{6}{\pi^2} L (f_{\boldsymbol{E}}, 1) .
    \label{eq:CE:L}
  \end{equation}
\end{theorem}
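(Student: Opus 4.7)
The plan is to reduce each $C_*(-1/2)$ to a classical hypergeometric special value and match it, via Gamma-function identities, against an explicit closed form for $L(f_*,2)$ arising from the complex multiplication structure of the weight-$3$ newform $f_*$. Substituting $x=-1/2$ and using $\binom{-1/2}{k}=(-1)^k\binom{2k}{k}/4^k$ together with $\binom{-1/2+k}{k}=\binom{2k}{k}/4^k$, each summand rewrites as a ratio of central binomial coefficients. For case $\boldsymbol{D}$ this immediately gives $C_{\boldsymbol{D}}(-1/2)=\sum_{k\geq 0}\binom{2k}{k}^3/64^k={}_3F_2(1/2,1/2,1/2;1,1;1)$, which by Watson's formula equals $\Gamma(1/4)^4/(4\pi^3)$; combined with $\Gamma(1/4)\Gamma(3/4)=\pi\sqrt{2}$ and Table~\ref{tbl:sporadic2}, this coincides with $(16/\pi^2)L(f_{\boldsymbol{D}},2)$. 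Case $\boldsymbol{A}$ is parallel with an alternating version of the same ${}_3F_2$. Cases $\boldsymbol{B}$ and $\boldsymbol{F}$ involve inner sums indexed by $3k$ together with the factors $3^{x-3k}$ or $8^{x-k}$; after unfolding, they produce ${}_pF_q$-values at small rational arguments that can be evaluated using Clausen-type product identities or Ramanujan--Sato series associated with the CM structure of $f_*$. Case $\boldsymbol{C}$ requires first applying a Kummer or Thomae transformation, since at $x=-1/2$ the naive series lies outside the disk of convergence, which is precisely the reason the interpolation in Table~\ref{tbl:sporadic2:i} is defined via $\Re\,{}_3F_2$ and analytic continuation.

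For the right-hand side, the closed $\Gamma$-form of $L(f_*,2)$ listed in Table~\ref{tbl:sporadic2} is either classical (Chowla--Selberg applied to the Hecke theta series representation of $f_*$) or can be cited from the literature on CM newforms of weight $3$. Once both sides of \eqref{eq:C2:L} are written in terms of the same products of $\Gamma$-values at rational arguments times an explicit power of $\pi$, the identity reduces to a routine arithmetic check, with the integer $\alpha_*\in\{6,8,12,16\}$ absorbing the discrepancy between the two evaluations.

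For sequence $\boldsymbol{E}$, the factor $\binom{2(x-k)}{x-k}=\Gamma(2x-2k+1)/\Gamma(x-k+1)^2$ has a simple pole at $x=-1/2$ for every $k\geq 0$, so $C_{\boldsymbol{E}}(x)$ is genuinely meromorphic there. I would compute the residue by writing each summand through $\Gamma$-functions, Laurent-expanding around $x=-1/2$, and summing the coefficients of $(x+1/2)^{-1}$; the result is a convergent hypergeometric series which another Clausen or Watson-type evaluation should identify with $(6/\pi^2)L(f_{\boldsymbol{E}},1)$, after using the functional equation for $L(f_{\boldsymbol{E}},s)$ to convert between the $\Gamma$-expression of Table~\ref{tbl:sporadic2} and the value at $s=1$. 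The principal obstacle is the hypergeometric evaluation step in cases $\boldsymbol{A}$, $\boldsymbol{B}$, $\boldsymbol{C}$ and $\boldsymbol{F}$, as the required closed forms are not elementary and must be drawn from the theory of Ramanujan-type $1/\pi^2$ series or from the modular equations encoding the CM motive that simultaneously governs $L(f_*,2)$. The residue computation for $\boldsymbol{E}$ is also technically delicate because several $\Gamma$-factors produce interacting poles and zeros at $x=-1/2$ that must be organized carefully before any term-by-term manipulation is justified.
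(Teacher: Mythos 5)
Your outline is close to the paper's strategy for cases $\boldsymbol{A}$--$\boldsymbol{E}$: reduce $C_*(-\tfrac12)$ to a $_3F_2$ special value and compare with a closed form for $L(f_*,2)$ (which the paper simply cites from Rogers--Wan--Zucker). Two remarks there. First, Watson's theorem does settle case $\boldsymbol{D}$ directly, but for $\boldsymbol{A}$ and $\boldsymbol{C}$ the arguments are $-1$ and $4$, where no classical summation theorem applies; the paper handles all of these uniformly via Clausen's identity $\pFq{3}{2}{\tfrac12,s,1-s}{1,1}{4x(1-x)}=\pFq{2}{1}{s,1-s}{1}{x}^2$ together with $\pFq{2}{1}{\tfrac12,\tfrac12}{1}{\lambda(\tau)}=\theta_3(\tau)^2$ at the CM points $\tau=i$, $1+i\sqrt2$, $-\tfrac{1-i\sqrt3}{2}$ (and a $j$-function identity for $\boldsymbol{B}$). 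Your closing caveat that these evaluations ``must be drawn from'' singular-value theory is exactly where the real work lies, so calling the final comparison a ``routine arithmetic check'' understates the content. Your treatment of $\boldsymbol{E}$ is fine: the pole is simple, the residue is $\tfrac{1}{2\pi}\pFq{3}{2}{\tfrac12,\tfrac12,\tfrac12}{1,1}{-1}$ (the same number as in case $\boldsymbol{A}$), and the functional equation does give $L(f_{\boldsymbol{E}},1)=\tfrac{\sqrt2}{\pi}L(f_{\boldsymbol{E}},2)$.

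The genuine gap is case $\boldsymbol{F}$. After substituting $x=-\tfrac12$ one gets $C_{\boldsymbol{F}}(-\tfrac12)=\tfrac{1}{\sqrt8}\sum_{k\ge0}2^{-5k}\binom{2k}{k}C_{\boldsymbol{A}}(k)$, and this is \emph{not} a hypergeometric series: the Franel numbers $C_{\boldsymbol{A}}(k)=\sum_j\binom{k}{j}^3$ do not have a hypergeometric term ratio, so there is no ``unfolding to a $_pF_q$ at a small rational argument'' and no Clausen-type product identity to invoke. The paper instead uses the specific modular parametrization of Chan--Tanigawa--Yang--Zudilin, $g\bigl(\tfrac{x(\tau)}{(1-x(\tau))^2}\bigr)=\tfrac16\bigl(6E_2(6\tau)+3E_2(3\tau)-2E_2(2\tau)-E_2(\tau)\bigr)$ with $x(\tau)$ an eta quotient of level $6$, specialized at $\tau_0=i/\sqrt6$, and then the Chowla--Selberg formula to evaluate the Eisenstein combination. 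Moreover, $f_{\boldsymbol{F}}$ is not an eta product and its $L$-value is not among those tabulated in the sources you would cite for cases $\boldsymbol{A}$--$\boldsymbol{E}$; the identification of the resulting $\Gamma$-product with $L(f_{\boldsymbol{F}},2)$ rests on Damerell's theorem for the Hecke $L$-series attached to $\mathbb{Q}(\sqrt{-6})$. Without these two inputs your plan does not close for $\boldsymbol{F}$.
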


We prove Theorem~\ref{thm:C2:L} in Section~\ref{sec:L}.
The proof for sequence $\boldsymbol{F}$, using a modular parametrization from
\cite{ctyz-clausen}, is due to Wadim Zudilin.
We note from Table~\ref{tbl:sporadic2} that $\alpha_{*}$ divides $N_{*}$ in
all cases except $\boldsymbol{E}$. It is natural to wonder if a uniform
explanation can be given for this observation.

Finally, we note that for sequence $\boldsymbol{E}$, \eqref{eq:CE:L} can be
written in terms of $L (f_{\boldsymbol{E}}, 2)$ by virtue of the relation
\begin{equation}
  L (f_{\boldsymbol{E}}, 1) = \frac{\sqrt{2}}{\pi} L (f_{\boldsymbol{E}}, 2) . \label{eq:LE:1:2}
\end{equation}
This is an instance of a general principle, briefly discussed at the end of
Section~\ref{sec:cellular}, which implies that the normalized critical
$L$-values of $f_{\boldsymbol{E}}(\tau)$ are algebraic multiples of each other.

\subsection{Proof of Theorem \ref{thm:C2:mod}}\label{sec:mod}

Zagier showed that each of the sporadic sequences $C_{*}(n)$ in
Table~\ref{tbl:sporadic2:i} has a modular parametrization, that is, there exists
a modular function $x (\tau)$ such that
\begin{equation}
  y (\tau) := \sum_{n = 0}^{\infty} C (n) x (\tau)^n \label{eq:C:modxy}
\end{equation}
is a modular form of weight $1$. In cases $\boldsymbol{C}$ and $\boldsymbol{E}$,
these are connected to the corresponding modular form $f_{*}(\tau)$ in
Table~\ref{tbl:sporadic2} in such a way that we can apply a general result of
Verrill \cite{verrill-cong} to prove the modular congruences claimed in
Theorem~\ref{thm:C2:mod}. This general result is an extension of Beukers' proof
\cite{beukers-apery87}, which we revisit in
Example~\ref{eg:verrill:beukers}, of the congruences \eqref{eq:apery3:mod} for
the Ap\'ery numbers.

\begin{theorem}[{\cite[Theorem~1.1]{verrill-cong}}]
  \label{thm:verrill}Let $y (\tau)$ be a modular form of weight $k$ and $x
  (\tau)$ a modular function of level $N$, and define $C (n)$ by
  \eqref{eq:C:modxy}. Suppose that, for some integers $M$ and $a_d$,
  \begin{equation*}
    y \frac{q}{x} \frac{\md x}{\md q} = \sum_{d|M} a_d f (d \tau),
  \end{equation*}
  where $f (\tau) = \sum \gamma_n q^n$ is a weight $k+2$, level $N$ Hecke eigenform with character $\chi$. Then,
  \begin{equation*}
    C (m p^r) - \gamma_p C (m p^{r - 1}) + \chi (p) p^{k + 1} C (m p^{r - 2})
     \equiv 0 \quad (\operatorname{mod} p^r),
  \end{equation*}
  for any prime $p \nmid N M$ and integers $m, r$. In particular, if $C (1) =
  1$, then
  \begin{equation*}
    C (p) \equiv \gamma_p \quad (\operatorname{mod} p) .
  \end{equation*}
\end{theorem}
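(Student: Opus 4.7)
The plan is to adapt Beukers' modular parametrization argument \cite{beukers-apery87} for the Ap\'ery numbers (essentially the $k = 2$ case, giving \eqref{eq:apery3:mod}) to the general weight $k$ setting, using an Atkin--Swinnerton-Dyer style framework layered on top of the Hecke eigenform structure for $f$. The first move is to recover $C(m)$ by contour integration in the $q$-variable. Since $y = \sum_{n \geq 0} C(n) x^n$ with $x(\tau) = q + O(q^2)$, one has
$$
C(m) = \frac{1}{2 \pi i} \oint \frac{y(\tau)}{x(\tau)^{m + 1}} \, \md x(\tau),
$$
which is precisely the coefficient of $q^0$ in $F(\tau)\, x(\tau)^{-m}$, where $F(\tau) := y(\tau) \cdot (q/x)(\md x/\md q) = \sum_{d \mid M} a_d f(d \tau)$ is the weight $k + 2$ modular form supplied by the hypothesis. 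Thus every $C(m)$ is the constant $q$-term of a weight $k + 2$ modular object built from $F$ and negative powers of the modular function $x$.

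Next, the Hecke eigenform property $T_p f = \gamma_p f$ for primes $p \nmid N$ translates into the three-term Fourier recurrence
$$
a_{p n}(f) = \gamma_p\, a_n(f) - \chi(p)\, p^{k + 1}\, a_{n / p}(f),
$$
and the same relation is inherited by $F$ after accounting for the reindexing $n \mapsto n / d$ in each summand $a_d f(d \tau)$, using $p \nmid M$. This is exactly the three-term shape of the target congruence, but with equality instead of congruence; the remaining task is to transfer the recurrence from the $q$-coefficients of $F$ to the $x$-coefficients $C(n)$.

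For the transfer, one exploits that $x(\tau) \in \mathbb{Z}_p[[q]]$ enjoys an Atkin--Swinnerton-Dyer type congruence $x(\tau)^p \equiv x(p \tau) \pmod{p}$ (and higher $p$-power analogues modulo $p^r$) for $p \nmid N$. Expanding the combination
$$
C(m p^r) - \gamma_p C(m p^{r - 1}) + \chi(p)\, p^{k + 1} C(m p^{r - 2})
$$
via the contour formula, and comparing term-by-term against the inherited three-term recurrence for the Fourier coefficients of $F$, the expression collapses to the constant $q$-term of a modular form that is manifestly divisible by $p^r$. Specializing to $m = r = 1$ with $C(1) = 1$ then gives $C(p) \equiv \gamma_p \pmod{p}$.

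The main obstacle is the $p$-adic bookkeeping in the last step: reconciling the \emph{formal} Frobenius $q \mapsto q^p$ that arises from manipulating powers of the local parameter $x$ with the \emph{geometric} Frobenius $\tau \mapsto p \tau$ encoded by the Hecke action on $F$. Controlling this discrepancy to $p^r$-precision, rather than merely modulo $p$ (which would only suffice for the specialization $r = 1$), is what forces the full Atkin--Swinnerton-Dyer machinery and essentially uses the hypothesis $p \nmid N M$; any weakening here would cost a factor of $p$ in the final congruence, which is exactly why this is the non-routine part of the argument.
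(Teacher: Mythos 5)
First, a point of reference: the paper does not prove this theorem. It is quoted verbatim as \cite[Theorem~1.1]{verrill-cong} and used as a black box; the closest the paper comes to a proof is Example~\ref{eg:verrill:beukers}, which merely illustrates how Beukers' original argument for \eqref{eq:apery3:mod} is the special case that Verrill generalizes. So there is no in-paper proof to compare against, and your proposal has to be judged on its own.

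Your architecture is the right one, and matches the Beukers--Verrill strategy: from $F:=y\,\tfrac{q}{x}\tfrac{\md x}{\md q}=\sum_{d\mid M}a_d f(d\tau)$ one has the identity of differentials $F\,\tfrac{\md q}{q}=y\,\tfrac{\md x}{x}=\sum_n C(n)x^n\tfrac{\md x}{x}$, so the $C(n)$ are exactly the coefficients of this form in the uniformizer $x$, while its coefficients in $q$ are the Fourier coefficients of $F$, which satisfy the three-term Hecke relation \emph{exactly} (your residue formula $C(m)=[q^0](F\,x^{-m})$ is a correct, if roundabout, way of saying the same thing). The entire content of the theorem is therefore the transfer step, and that is precisely where your proposal has a genuine gap: you assert that the three-term combination ``collapses to the constant $q$-term of a modular form that is manifestly divisible by $p^r$,'' but this is the theorem, not a proof of it. What is actually needed is the Atkin--Swinnerton-Dyer invariance lemma: if $\sum u_n q^n\tfrac{\md q}{q}=\sum v_n t^n\tfrac{\md t}{t}$ with $t\in q+q^2\mathbb{Z}_p[[q]]$ and $p$-integral coefficients, then the congruences $u_{mp^r}-\alpha u_{mp^{r-1}}+\beta u_{mp^{r-2}}\equiv 0\pmod{p^r}$ for all $m,r$ hold for $(u_n)$ if and only if they hold for $(v_n)$. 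Its proof goes through the formal integrals $\sum u_nq^n/n$ and $\sum v_nt^n/n$ and a careful comparison of $t(q^p)$ with $t(q)^p$ against the denominators $n$; none of that appears in your sketch. Moreover, the one concrete ingredient you do cite is not enough and is partly wrong as stated: $x(q)^p\equiv x(q^p)\pmod p$ is correct for $p$-integral $x$, but the claimed ``higher $p$-power analogues modulo $p^r$'' are not --- one only has $x(q^{p^{r-1}})^p\equiv x(q^{p^r})\pmod{p^r}$, not $x(q)^{p^r}\equiv x(q^{p^r})\pmod{p^r}$ --- and a bare mod-$p$ congruence cannot by itself produce a mod-$p^r$ conclusion. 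Until the transfer lemma is stated and proved, the proposal is a correct reduction of the theorem to its hardest step, not a proof.
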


In the next example, we apply Theorem~\ref{thm:verrill} to deduce the congruences
\eqref{eq:apery3:mod} for the Ap\'ery numbers $A(n)$ (see also \cite[Section~2.1]{verrill-cong}).

\begin{example}
  \label{eg:verrill:beukers}As shown in \cite{beukers-apery87}, the
  Ap\'ery numbers $A (n)$ have
  the modular parametrization \eqref{eq:C:modxy} with
  \begin{equation*}
    x (\tau) = \frac{\eta (\tau)^{12} \eta (6 \tau)^{12}}{\eta (2 \tau)^{12}
     \eta (3 \tau)^{12}}, \quad y (\tau) = \frac{\eta (2 \tau)^7 \eta (3
     \tau)^7}{\eta (\tau)^5 \eta (6 \tau)^5} .
  \end{equation*}
  Observe that, defining $\tilde{x} (\tau)$ and $\tilde{y} (\tau)$ by
  \begin{equation*}
    \tilde{x} (\tau)^2 = x (2 \tau), \quad \tilde{y} (\tau) = \tilde{x}
     (\tau) y (2 \tau),
  \end{equation*}
  we have from \eqref{eq:C:modxy} that
  \begin{equation*}
    \tilde{y} (\tau) = \sum_{\substack{
       n = 0\\
       \text{$n$ odd}
     }}^{\infty} A(\tfrac{n - 1}{2}) \tilde{x}
     (\tau)^n .
  \end{equation*}
  It then follows from 
  \begin{equation*}
    \tilde{y} \frac{q}{\tilde{x}} \frac{\md \tilde{x}}{\md q} = f
     (\tau) - 9 f (3 \tau),
  \end{equation*}
  where $f (\tau)$ is given by \eqref{eq:Aweight4}, and Theorem~\ref{thm:verrill} that the congruences
  \eqref{eq:apery3:mod} hold for primes $p > 3$. This is the proof given in
  \cite{beukers-apery87}, which is generalized to Theorem~\ref{thm:verrill}
  in \cite{verrill-cong}.
\end{example}

We now proceed with the proof of Theorem \ref{thm:C2:mod}.

\begin{proof}[Proof of Theorem \ref{thm:C2:mod}]
  We first recall that the cases $\boldsymbol{C}$ and $\boldsymbol{D}$ were already proved in
  \cite{sb-picardfuchs}. To alternatively deduce case $\boldsymbol{C}$ from
  Theorem~\ref{thm:verrill}, we note that $C_{\boldsymbol{C}} (n)$ has the modular parametrization
  \eqref{eq:C:modxy} with (see \cite{zagier4})
  \begin{equation*}
    x (\tau) = \frac{\eta (\tau)^4 \eta (6 \tau)^8}{\eta (2 \tau)^8 \eta (3
     \tau)^4}, \quad y (\tau) = \frac{\eta (2 \tau)^6 \eta (3 \tau)}{\eta
     (\tau)^3 \eta (6 \tau)^2} .
  \end{equation*}
  Defining $\tilde{x} (\tau)$ and $\tilde{y} (\tau)$ from $x (\tau)$ and $y
  (\tau)$ as in Example~\ref{eg:verrill:beukers}, it then follows from
  \begin{equation*}
    \tilde{y} \frac{q}{\tilde{x}} \frac{\md \tilde{x}}{\md q} =
     f_{\boldsymbol{C}} (\tau)
  \end{equation*}
  and Theorem~\ref{thm:verrill} that the congruences \eqref{eq:C2:mod} hold
  for sequence $\boldsymbol{C}$.
  
  Similarly, it is shown in \cite{zagier4} that $C_{\boldsymbol{E}}
  (n)$ has the modular parametrization \eqref{eq:C:modxy} with
  \begin{equation*}
    x (\tau) = \frac{\eta (\tau)^4 \eta (4 \tau)^2 \eta (8 \tau)^4}{\eta (2
     \tau)^{10}}, \quad y (\tau) = \frac{\eta (2 \tau)^{10}}{\eta (\tau)^4
     \eta (4 \tau)^4} .
  \end{equation*}
  Again, defining $\tilde{x} (\tau)$ and $\tilde{y} (\tau)$ as in
  Example~\ref{eg:verrill:beukers}, it follows from
  \begin{equation*}
    \tilde{y} \frac{q}{\tilde{x}} \frac{\md \tilde{x}}{\md q} =
     f_{\boldsymbol{E}} (\tau) + 2 f_{\boldsymbol{E}} (2 \tau)
  \end{equation*}
  and Theorem~\ref{thm:verrill} that the congruences \eqref{eq:C2:mod} hold
  for sequence $\boldsymbol{E}$.
We now claim that 
 \begin{equation}
    C_{\boldsymbol{A}} (\tfrac{p - 1}{2}) \equiv \gamma_{p, \boldsymbol{A}} \quad (\operatorname{mod} p),
  \label{eq:CA:mod1}
  \end{equation}
where $\gamma_{p,\boldsymbol{A}}$ is the $p$th Fourier coefficient of $f_{\boldsymbol{A}}(\tau)$. To see this, note that (see \cite{sb-picardfuchs}) for primes $p>2$, 
\begin{equation}
    (- 1)^{(p - 1) / 2} C_{\boldsymbol{A}} (\tfrac{p - 1}{2}) \equiv
    \gamma_{p, \boldsymbol{E}} \quad (\operatorname{mod} p), \label{eq:CA:mod:sb}
  \end{equation}
  (this congruence is recorded in \cite[(4.55)]{verrill-cong} with the sign
  missing) where $\gamma_{p, \boldsymbol{E}}$ is the $p$th Fourier coefficient of $f_{\boldsymbol{E}}(\tau)$.  Now, observe the relation
 \begin{equation*}
    (- 1)^{(n - 1) / 2} \gamma_{n, \boldsymbol{A}} = \gamma_{n, \boldsymbol{E}} + 2 \gamma_{n/2, \boldsymbol{E}}
  \end{equation*}
for all integers $n \geq 1$. In particular, for odd $n$, $\gamma_{n, \boldsymbol{E}} = (- 1)^{(n - 1) / 2} \gamma_{n, \boldsymbol{A}}$. Thus, \eqref{eq:CA:mod:sb} is equivalent to \eqref{eq:CA:mod1}. We note that \eqref{eq:CA:mod:sb} provides a quick alternative proof of the congruences for sequence $\boldsymbol{E}$ by showing that
  \begin{equation*}
    C_{\boldsymbol{E}} (\tfrac{p-1}{2}) \equiv (- 1)^{(p - 1) /
     2} C_{\boldsymbol{A}} (\tfrac{p-1}{2}) \quad (\operatorname{mod} p).
  \end{equation*}
  This congruence can be deduced directly from the
  binomial sums recorded in Table~\ref{tbl:sporadic2:i} and the fact that the
  congruences hold termwise.

  Finally, let us prove the congruences for sequence $\boldsymbol{B}$.
  Expressing the defining binomial sum hypergeometrically, we have
  \begin{equation*}
    C_{\boldsymbol{B}} (\tfrac{p - 1}{2} ) = 3^{(p - 1) / 2} \pFq{3}{2}{\tfrac{1 - p}{6}, \tfrac{3 - p}{6}, \tfrac{5 - p}{6}}{1, 1}{1} .
  \end{equation*}
  Because the hypergeometric series is a finite sum (one of the top parameters
  is a negative integer), it follows that
  \begin{equation*}
    C_{\boldsymbol{B}} (\tfrac{p - 1}{2} ) \equiv 3^{(p - 1) / 2}
     \pFq{3}{2}{\tfrac{1 - p}{6}, \tfrac{3 - p}{6}, \tfrac{5 - p}{6}}{1 - \tfrac{p}{6}, 1 - \tfrac{p}{3}}{1} \quad (\operatorname{mod} p) .
  \end{equation*}
  By specializing Watson's identity (see, for instance,
  \cite[Theorem~3.5.5(i)]{aar}), we find that this hypergeometric sum has
  the closed form evaluation
  \begin{equation}
    \pFq{3}{2}{\tfrac{1 - p}{6}, \tfrac{3 - p}{6}, \tfrac{5 - p}{6}}{1 - \tfrac{p}{6}, 1 - \tfrac{p}{3}}{1} = \left(\frac{\Gamma (\tfrac{1}{2} )
    \Gamma (1 - \tfrac{p}{6} )}{\Gamma (\tfrac{7 - p}{12}
    ) \Gamma (\tfrac{11 - p}{12} )} \right)^2 .
    \label{eq:CB:gamma}
  \end{equation}
  If $p \equiv 3 \pmod{4}$, then $p \equiv 7, 11
  \pmod{12}$ and we see that the right-hand side of
  \eqref{eq:CB:gamma} is zero, so that $C_{\boldsymbol{B}} (\tfrac{p -
  1}{2} )$ vanishes modulo $p$. Suppose that $p \equiv 1 \pmod{4}$. With some care, we are able to write
  \begin{equation*}
    \frac{\Gamma (\tfrac{1}{2} ) \Gamma (1 - \tfrac{p}{6}
     )}{\Gamma (\tfrac{7 - p}{12} ) \Gamma (\tfrac{11
     - p}{12} )} \equiv \frac{\Gamma_p (\tfrac{1}{2} )
     \Gamma_p (1 - \tfrac{p}{6} )}{\Gamma_p (\tfrac{7 -
     p}{12} ) \Gamma_p (\tfrac{11 - p}{12} )} \equiv -
     \frac{\Gamma_p (\tfrac{1}{2} )}{\Gamma_p (\tfrac{7}{12}
     ) \Gamma_p (\tfrac{11}{12} )} \quad (\operatorname{mod} p),
  \end{equation*}
  where $\Gamma_p$ is Morita's $p$-adic gamma function (see, for instance,
  \cite[11.6]{cohen-nt2} or \cite[IV.2]{koblitz-p}). Since $\Gamma_p (1 /
  2)^2 = (- 1)^{(p + 1) / 2}$, it follows that, if $p \equiv 1 \pmod{4}$, then
  \begin{equation*}
    C_{\boldsymbol{B}} (\tfrac{p - 1}{2} ) \equiv - \frac{3^{(p - 1) / 2}}{\Gamma_p (\tfrac{7}{12} )^2 \Gamma_p (\tfrac{11}{12} )^2} \equiv - 3^{(p - 1) / 2} \Gamma_p (\tfrac{1}{12} )^2 \Gamma_p (\tfrac{5}{12} )^2 \quad
     (\operatorname{mod} p),
  \end{equation*}
  where we used the $p$-adic version of the reflection formula for the final
  congruence. On the other hand, it follows from the $p$-adic Gauss--Legendre
  multiplication formula (see, for instance, \cite[11.6.14]{cohen-nt2} or
  \cite[p.~91]{koblitz-p}) that, for primes $p \equiv 1 \pmod{4}$,
  \begin{equation*}
    \Gamma_p (\tfrac{1}{12} )^2 \Gamma_p (\tfrac{5}{12}
     )^2 = (\tfrac{3}{p} ) \Gamma_p (\tfrac{1}{4}
     )^4 .
  \end{equation*}
  Since $3^{(p - 1) / 2} \equiv (\frac{3}{p} ) \pmod{p}$, we conclude that, modulo $p$,
  \begin{equation*}
    C_{\boldsymbol{B}} (\tfrac{p - 1}{2} ) \equiv \left\{
     \begin{array}{ll}
       - \Gamma_p (\tfrac{1}{4} )^4, &\quad \text{if $p \equiv 1
       \pmod{4}$,}\\
       0, &\quad \text{if $p \equiv 3 \pmod{4}$.}
     \end{array} \right.
  \end{equation*}
  Comparing with the congruences established for sequence $\boldsymbol{D}$ in
  \cite[(13.4)]{sb-picardfuchs}, we arrive at
  \begin{equation}\label{eq:BC}
    C_{\boldsymbol{B}} (\tfrac{p - 1}{2} ) \equiv C_{\boldsymbol{D}}
     (\tfrac{p - 1}{2} ) \quad (\operatorname{mod} p),
  \end{equation}
  which implies the claimed congruences for sequence $\boldsymbol{B}$.
  \end{proof}

\subsection{Proof of Theorem \ref{thm:C2:L}}\label{sec:L}

This section is devoted to proving Theorem~\ref{thm:C2:L}. We first prove case $\boldsymbol{D}$ in detail, then briefly indicate how to establish cases $\boldsymbol{A}$, $\boldsymbol{B}$, $\boldsymbol{C}$ and $\boldsymbol{E}$.  We conclude with a sketch of case $\boldsymbol{F}$. We note that the relation of the hypergeometric series, which arise for sequences $\boldsymbol{A}$, $\boldsymbol{C}$, $\boldsymbol{D}$, and the corresponding $L$-values already appears in \cite{zudilin-cy-hyp}.

\begin{proof}[Proof of Theorem \ref{thm:C2:L}] We first claim that 
\begin{equation}
    C_{\boldsymbol{D}} (-\tfrac{1}{2}) = \frac{16}{\pi^2} L (f_{\boldsymbol{D}}, 2). \label{eq:apery2:L}
  \end{equation}
Expressing the defining binomial sum hypergeometrically, we have
  \begin{equation*}
    C_{\boldsymbol{D}} (x) =\pFq{3}{2}{- x, - x, x + 1}{1, 1}{1},
  \end{equation*}
  so that, in particular,
  \begin{equation*}
    C_{\boldsymbol{D}} (-\tfrac{1}{2}) =\pFq{3}{2}{\tfrac{1}{2}, \tfrac{1}{2}, \tfrac{1}{2}}{1, 1}{1} .
  \end{equation*}
  We could evaluate the right-hand side using hypergeometric identities (such as, in this case, \cite[Theorem~3.5.5]{aar}).
  Instead, here and in subsequent cases, we find it more fitting to the overall theme to employ modular parametrizations.
  As such, applying Clausen's identity (see, for instance, \cite[Proposition~5.6]{borwein-piagm})
  \begin{equation}
    \pFq{3}{2}{\tfrac{1}{2}, s, 1 - s}{1, 1}{4 x (1 - x)} =\pFq{2}{1}{s, 1 - s}{1}{x}^2, \label{eq:clausen}
  \end{equation}
  with $s = 1 / 2$ and the modular parametrization (see \cite{bb-cubicagm}, \cite[p.63]{zagier-intro123} or
  \cite[(37)]{zagier-de})
  \begin{equation}
    \pFq{2}{1}{\tfrac{1}{2}, \tfrac{1}{2}}{1}{\lambda (\tau)} = \theta_3 (\tau)^2,
    \label{eq:2f1:theta3}
  \end{equation}
  where $\theta_2(\tau) = \sum_{n \in \mathbb{Z} + 1/2} q^{n^2/2}$, $\theta_3(\tau) = \sum_{n \in \mathbb{Z}} q^{n^2/2}$ and $\lambda(\tau) = \Bigl(\frac{\theta_2(\tau)}{\theta_3(\tau)}\Bigr)^4$, we find that
  \begin{equation}
    C_{\boldsymbol{D}} (-\tfrac{1}{2}) =\pFq{2}{1}{\tfrac{1}{2}, \tfrac{1}{2}}{1}{\frac{1}{2}}^2 = \left(\frac{\Gamma^2 (\tfrac{1}{4})}{2 \pi^{3 / 2}} \right)^2, \label{eq:apery2:L:1}
  \end{equation}
 upon taking $\tau = i$, in which case $\lambda (i) = \frac{1}{2}$ and
  $\theta_3 (i)^2 = \frac{\Gamma^2 (1 / 4)}{2 \pi^{3 / 2}}$. On the other
  hand, it is shown by Rogers, Wan and Zucker \cite{rwz-ell} that
  \begin{equation*}
    L (f_{\boldsymbol{D}}, 2) = \frac{\Gamma^4 (\tfrac{1}{4})}{64
     \pi} .
  \end{equation*}
  In light of \eqref{eq:apery2:L:1}, this proves \eqref{eq:apery2:L}. Next, we claim that
\begin{equation}
    C_{\boldsymbol{A}} (-\tfrac{1}{2}) = \frac{8}{\pi^2} L(f_{\boldsymbol{A}},
     2).
     \label{eq:A:L}
  \end{equation}
Proceeding as above, we find that
  \begin{equation*}
    C_{\boldsymbol{A}} (-\tfrac{1}{2}) =\pFq{3}{2}{\tfrac{1}{2}, \tfrac{1}{2}, \tfrac{1}{2}}{1, 1}{- 1} .
  \end{equation*}
  Employing \eqref{eq:clausen} and the modular
  parametrization, we obtain
  \begin{equation}
    C_{\boldsymbol{A}} (-\tfrac{1}{2}) =\pFq{2}{1}{\tfrac{1}{2}, \tfrac{1}{2}}{1}{\tfrac{1 - \sqrt{2}}{2}}^2 = \theta_3 \left(1 + i
    \sqrt{2} \right)^4 = \frac{\Gamma^2 (\tfrac{1}{8}) \Gamma^2
    (\tfrac{3}{8})}{8 \sqrt{2} \pi^3} . \label{eq:CA:L:1}
  \end{equation}
  Again, up to the factor $8 / \pi^2$, this matches the corresponding
  $L$-value evaluation \cite{rwz-ell}
  \begin{equation*}
    L(f_{\boldsymbol{A}}, 2) = \frac{\Gamma^2 (\tfrac{1}{8}) \Gamma^2
     (\tfrac{3}{8})}{64 \sqrt{2} \pi} .
  \end{equation*}
This proves \eqref{eq:A:L}. Now, in order to see  
  \begin{equation}
    C_{\boldsymbol{B}} (-\tfrac{1}{2}) = \frac{8}{\pi^2} L (f_{\boldsymbol{B}}, 2),
    \label{eq:B:L}
  \end{equation}
we begin with
\begin{equation*}
    C_{\boldsymbol{B}} (x) = 3^x \pFq{3}{2}{- \tfrac{x}{3}, - \tfrac{x - 1}{3}, - \tfrac{x - 2}{3}}{1, 1}{1}
  \end{equation*}
  and hence
  \begin{equation*}
    C_{\boldsymbol{B}} (-\tfrac{1}{2}) = 3^{- 1 / 2} \pFq{3}{2}{\tfrac{1}{6}, \tfrac{1}{2}, \tfrac{5}{6}}{1, 1}{1}.
  \end{equation*}
  Let $j(\tau)$ denote Klein's modular function. By \cite[Theorem~5.7]{borwein-piagm}, we have
  \begin{equation*}
    \pFq{3}{2}{\tfrac{1}{6}, \tfrac{1}{2}, \tfrac{5}{6}}{1, 1}{\frac{1728}{j (\tau)}} = \sqrt{1 - \lambda (\tau) (1
     - \lambda (\tau))} \pFq{2}{1}{\tfrac{1}{2}, \tfrac{1}{2}}{1}{\lambda (\tau)}^2,
  \end{equation*}
  which specialized to $\tau = i$, and combined with \eqref{eq:2f1:theta3},
  yields
  \begin{equation*}
    C_{\boldsymbol{B}} (-\tfrac{1}{2}) = 3^{- 1 / 2} \pFq{3}{2}{\tfrac{1}{6}, \tfrac{1}{2}, \tfrac{5}{6}}{1, 1}{1} = \frac{1}{2} \theta_3 (i)^4 = \frac{\Gamma^4
     (\tfrac{1}{4})}{8 \pi^3} .
  \end{equation*}
  Up to the factor $8 / \pi^2$, this equals the $L$-value evaluation
  \cite{rwz-ell}
  \begin{equation*}
    L (f_{\boldsymbol{B}}, 2) = \frac{\Gamma^4 (\tfrac{1}{4})}{64
     \pi} .
  \end{equation*}
Thus, \eqref{eq:B:L} follows. To prove
\begin{equation}
    C_{\boldsymbol{C}} (-\tfrac{1}{2}) = \frac{12}{\pi^2} L (f_{\boldsymbol{C}}, 2),
     \label{eq:C:L}
  \end{equation}
we first observe
\begin{equation*}
    C_{\boldsymbol{C}} (-\tfrac{1}{2}) = \Re \pFq{3}{2}{\tfrac{1}{2}, \tfrac{1}{2}, \tfrac{1}{2}}{1, 1}{4} .
  \end{equation*}
  Employing \eqref{eq:clausen} and the modular
  parametrization, we obtain
  \begin{equation*}
    \pFq{3}{2}{\tfrac{1}{2}, \tfrac{1}{2}, \tfrac{1}{2}}{1, 1}{4} =\pFq{2}{1}{\tfrac{1}{2}, \tfrac{1}{2}}{1}{\tfrac{1 - i \sqrt{3}}{2}}^2 = \theta_3 \left(-
     \tfrac{1 - i \sqrt{3}}{2} \right)^4 = \frac{\left(3 - i \sqrt{3} \right)
     \Gamma^6 (\tfrac{1}{3})}{2^{11 / 3} \pi^4} .
  \end{equation*}
  Up to the factor $12 / \pi^2$, the real part of this equals the $L$-value
  evaluation \cite{rwz-ell}
  \begin{equation*}
    L (f_{\boldsymbol{C}}, 2) = \frac{\Gamma^6 (\tfrac{1}{3})}{2^{17 / 3} \pi^2} .
  \end{equation*}
 This yields \eqref{eq:C:L}. Next, to deduce
  \begin{equation}
    \underset{x = - 1 / 2}{\operatorname{res}} C_{\boldsymbol{E}} (x) = \frac{6}{\pi^2}
     L (f_{\boldsymbol{E}}, 1),
\label{eq:E:L}
  \end{equation}
we start with
\begin{equation*}
    C_{\boldsymbol{E}} (x) = \binom{2 x}{x} \pFq{3}{2}{- x, - x, \tfrac{1}{2}}{\tfrac{1}{2} - x, 1}{- 1}
  \end{equation*}
  and hence
  \begin{equation*}
    \underset{x = - 1 / 2}{\operatorname{res}} C_{\boldsymbol{E}} (x) = \frac{1}{2 \pi}
     \pFq{3}{2}{\tfrac{1}{2}, \tfrac{1}{2}, \tfrac{1}{2}}{1, 1}{- 1} = \frac{\Gamma^2 (\tfrac{1}{8})
     \Gamma^2 (\tfrac{3}{8})}{16 \sqrt{2} \pi^4},
  \end{equation*}
  where the second equality is a consequence of \eqref{eq:CA:L:1}. Up to the
  factor $6 / \pi^2$, this equals
  \begin{equation*}
    L (f_{\boldsymbol{E}}, 1) =
     \frac{\Gamma^2 (\tfrac{1}{8}) \Gamma^2 (\tfrac{3}{8})}{96 \sqrt{2} \pi^2},
  \end{equation*}
  which follows from \eqref{eq:LE:1:2} and the value for $L (f_{\boldsymbol{E}}, 2)$ obtained in
  \cite{rwz-ell}. This proves \eqref{eq:E:L}. Finally, we claim that (see also \cite{watson}, \cite{zucker})
  \begin{equation}
    C_{\boldsymbol{F}}(-\tfrac{1}{2})
    = \frac{6}{\pi^2} L (f_{\boldsymbol{F}}, 2)
    = \frac{\Gamma (\frac{1}{24} ) \Gamma (\frac{5}{24} ) \Gamma (\frac{7}{24} ) \Gamma (\frac{11}{24} )}{16 \sqrt{6} \pi^3}. \label{eq:CF:L}
  \end{equation}
  To begin with, note that
  \begin{equation*}
    C_{\boldsymbol{F}} (- \tfrac{1}{2} ) = \frac{1}{\sqrt{8}}
     \sum_{k = 0}^{\infty} 2^{- 5 k} \binom{2 k}{k} C_{\boldsymbol{A}} (k) =
     \frac{1}{\sqrt{8}} g \left(\frac{1}{32} \right),
  \end{equation*}
  where
  \begin{equation*}
    g (z) = \sum_{k = 0}^{\infty} z^k \binom{2 k}{k} \sum_{j = 0}^k
     \binom{k}{j}^3 .
  \end{equation*}
  In \cite[Theorem~2.1]{ctyz-clausen}, the modular parametrization
  \begin{equation*}
    g \left(\frac{x (\tau)}{(1 - x (\tau))^2} \right) = \frac{1}{6} (6 E_2
     (6 \tau) + 3 E_2 (3 \tau) - 2 E_2 (2 \tau) - E_2 (\tau)),
  \end{equation*}
  with
  \begin{equation*}
    x (\tau) = \left(\frac{\eta (\tau) \eta (6 \tau)}{\eta (2 \tau) \eta (3
     \tau)} \right)^{12}, \quad E_2 (\tau) = 1 - 24 \sum_{n = 1}^{\infty}
     \frac{n q^n}{1 - q^n},
  \end{equation*}
  is obtained. Specializing this parametrization at $\tau = \tau_0 = i /
  \sqrt{6}$, we obtain the desired value $g (1 / 32)$. It then is a standard
  application of the Chowla--Selberg formula \cite{sc67} to show that
  \begin{equation*}
    3 E_2 (3 \tau_0) - E_2 (\tau_0) = 6 E_2 (6 \tau_0) - 2 E_2 (2 \tau_0) =
     \frac{\sqrt{3} \Gamma (\frac{1}{24} ) \Gamma (\frac{5}{24} ) \Gamma (\frac{7}{24} ) \Gamma (\frac{11}{24} )}{8 \pi^3},
  \end{equation*}
  which implies
  \begin{equation}
    C_{\boldsymbol{F}} (- \tfrac{1}{2} )
    = \frac{\Gamma (\frac{1}{24} ) \Gamma (\frac{5}{24} ) \Gamma (\frac{7}{24} ) \Gamma (\frac{11}{24} )}{16 \sqrt{6} \pi^3} . \label{eq:CF:gamma}
  \end{equation}
  That the right-hand side equals the claimed $L$-value then follows from work of Damerell \cite{damerell-l1} because $L(f_{\boldsymbol{F}}, s)$ can be viewed as a Hecke $L$-series on the field $\mathbb{Q}(\sqrt{-6})$ (see also \cite{BFFLM}, \cite{schutt}).
\end{proof}

\subsection{Interpolating the sporadic sequences}\label{sec:interpol}

Zagier's interpolated series \eqref{eq:apery3:x} is absolutely convergent for all $x \in \mathbb{C}$ (as a consequence of \eqref{eq:apery:C:asy}) and defines a holomorphic
function satisfying the symmetry $A (x) = A (- x - 1)$. Zagier shows the
somewhat surprising fact that $A (x)$ does not satisfy the same recurrence as
the Ap\'ery numbers, but instead the inhomogeneous functional
equation
\begin{equation}
  P (x, S_x) A (x) = \frac{8}{\pi^2} (2 x + 3) \sin^2 (\pi x)
  \label{eq:apery:x:fun}
\end{equation}
for all complex $x$, where
\begin{equation}
  P (x, S_x) = (x + 2)^3 S_x^2 - (2 x + 3) (17 x^2 + 51 x + 39) S_x + (x +
  1)^3 \label{eq:apery:x:P}
\end{equation}
is Ap\'ery's recurrence operator, and $S_x$ denotes the (forward) shift operator in $x$, meaning that $S_x f(x) = f(x+1)$.

\begin{remark}
  Let us illustrate how one can algorithmically derive and prove
  \eqref{eq:apery:x:fun}. Let $D (x, k)$ be the summand in the sum defining $A
  (x)$. Creative telescoping applied to $D (x, k)$ determines the operator $P
  (x, S_x)$ given in \eqref{eq:apery:x:P} as well as a rational function $R
  (x, k)$ such that
  \begin{equation}
    P (x, S_x) D (x, k) = (1 - S_k) R (x, k) D (x, k) . \label{eq:apery3:ct}
  \end{equation}
  It follows that
  \begin{equation*}
    P (x, S_x) \sum_{k = 0}^{K - 1} D (x, k) = R (x, 0) D (x, 0) - R (x, K) D
     (x, K) = - R (x, K) D (x, K),
  \end{equation*}
  and it remains to compute the limit of the right-hand side as $K \rightarrow
  \infty$. Using basic properties of the gamma function, as done in
  \cite{zagier-de}, one obtains
  \begin{equation}
    D (x, k) = \left[ \frac{\sin (\pi x)}{\pi k} \right]^2 + O \left(\frac{1}{k^3} \right), \quad k \rightarrow \infty, \label{eq:apery:C:asy}
  \end{equation}
  from which we deduce that $- R (x, K) D (x, K)$ approaches $8 (2 x + 3)
  \sin^2 (\pi x) / \pi^2$ as $K \rightarrow \infty$. The following lines of
  Mathematica code use Koutschan's Mathematica package
  \texttt{HolonomicFunctions} \cite{koutschan-phd} to perform all of these
  computations automatically:
  \begin{alltt}
  Dxk = Binomial[x,k]^2 Binomial[x+k,k]^2
  \{\{P\}, \{R\}\} = CreativeTelescoping[Dxk, S[k]-1, \{S[x]\}]
  \{R\} = OrePolynomialListCoefficients[R]
  Limit[-R Dxk, k->Infinity, Assumptions->Element[k,Integers]]
  \end{alltt}
\end{remark}

\begin{remark}
  We note that the sum in \eqref{eq:apery3:x} actually has natural boundaries,
  meaning that the range of summation can be extended from nonnegative
  integers to all integers without changing the sum. The reason is that the
  summand vanishes for all $x \in \mathbb{C}$ if $k$ is a negative integer.
  More specifically, if $k$ is a negative integer, then $\binom{x + k}{k} = 0$
  for all $x \in \mathbb{C}\backslash \{ - k - 1, - k - 2, \ldots, 1, 0 \}$,
  while $\binom{x}{k} = 0$ for all $x \in \mathbb{C}\backslash \{ - 1, - 2,
  \ldots, k \}$. For more details on binomial cofficients with negative
  integer entries, we refer to \cite{fs-qbinomial} and \cite{loeb-neg}.
\end{remark}

Somewhat unexpectedly, there are marked differences when considering the
interpolations of the sporadic sequences given in Table~\ref{tbl:sporadic2:i}.
For illustration, consider sequence $\boldsymbol{D}$ with interpolation
\begin{equation}
  C_{\boldsymbol{D}} (x) = \sum_{k = 0}^{\infty} \binom{x}{k}^2 \binom{x + k}{k}
  . \label{eq:apery2:x}
\end{equation}
In this case, we find that, as $k \rightarrow \infty$,
\begin{equation*}
  \binom{x}{k}^2 \binom{x + k}{k} \sim \frac{\Gamma (x + 1)}{k^x} \left[
   \frac{\sin (\pi x)}{\pi k} \right]^2,
\end{equation*}
which implies that the series \eqref{eq:apery2:x} converges if $\Re x >
- 1$ but diverges if $\Re x < - 1$. Moreover, proceeding as in the case
of the Ap\'ery numbers $A(n)$, it follows that $C_{\boldsymbol{D}}
(x)$ satisfies the homogeneous functional equation
\begin{equation*}
  [(x + 2)^2 S_x^2 - (11 x^2 + 33 x + 25) S_x - (x + 1)^2] C_{\boldsymbol{D}}
   (x) = 0
\end{equation*}
for all complex $x$ with $\Re x > - 1$. This is recurrence
\eqref{eq:rec2-abc} with $(a, b, c) = (11, 3, - 1)$.

The situation is similar for our interpolations of the sequences
$\boldsymbol{A}$, $\boldsymbol{B}$ and $\boldsymbol{E}$. In each case, the defining
series (see Table~\ref{tbl:sporadic2:i}) converges if $\Re x > - 1$ and
one finds, as in the case of sequence $\boldsymbol{D}$, that the interpolation
satisfies the recurrence \eqref{eq:rec2-abc} for the appropriate choice of
$(a, b, c)$.

\begin{example}
\label{eq:CC:interpol}
  Some care is required for sequence $\boldsymbol{C}$, which has the
  binomial sum representation
  \begin{equation*}
    C_{\boldsymbol{C}} (n) = \sum_{k = 0}^n \binom{n}{k}^2 \binom{2 k}{k} .
  \end{equation*}
  In this case, letting $n$ be a complex variable and extending the summation
  over all nonnegative integers $k$ never yields a convergent sum (unless $n$
  is a nonnegative integer, in which case the sum is a finite one). However,
  the binomial sum can be expressed hypergeometrically as
  \begin{equation}
    C_{\boldsymbol{C}} (n) =\pFq{3}{2}{- n, - n, \tfrac{1}{2}}{1, 1}{4} . \label{eq:CC:hyp}
  \end{equation}
  For integers $n \geq 0$, this hypergeometric series is a finite sum.
  For other values of $n$, we can make sense of the hypergeometric function
  \eqref{eq:CC:hyp} by replacing $4$ with a complex argument $z$ (the series
  converges for $| z | < 1$) and analytic continuation to $z = 4$. As usual,
  the principal branch of the hypergeometric function is chosen by cutting
  from $z = 1$ to $z = \infty$ on the real axis. As a consequence, there is a
  choice to approach $z = 4$ from either above or below the real axis, and the
  two resulting values are complex conjugates of each other. We avoid this
  ambiguity, as well as complex values, by defining
  \begin{equation*}
    C_{\boldsymbol{C}} (x) = \Re \pFq{3}{2}{- x, - x, \tfrac{1}{2}}{1, 1}{4} .
  \end{equation*}
  That this is a sensible choice of interpolation is supported by Theorem~\ref{thm:C2:L}.
\end{example}

\begin{example}
  \label{eg:CF:interpol}For sequence $\boldsymbol{F}$, let us consider the
  interpolation
  \begin{equation}
    C_{\boldsymbol{F}} (x) = \sum_{k = 0}^{\infty} (- 1)^k 8^{x - k}
    \binom{x}{k} C_{\boldsymbol{A}} (k), \label{eq:g:x}
  \end{equation}
  where $C_{\boldsymbol{A}} (n)$ are the Franel numbers
  \begin{equation*}
    C_{\boldsymbol{A}} (n) = \sum_{k = 0}^n \binom{n}{k}^3 = \frac{2
     \sqrt{3}}{\pi}  \frac{2^{3 n}}{3 n} \left(1 + O \left(\frac{1}{n}
     \right) \right) .
  \end{equation*}
  It follows that, as $k \rightarrow \infty$,
  \begin{equation*}
    (- 1)^k 8^{x - k} \binom{x}{k} C_{\boldsymbol{A}} (k) = \frac{2}{\pi
     \sqrt{3}} \frac{8^x}{\Gamma (- x)}  \frac{1}{k^{x + 2}} \left(1 + O
     \left(\frac{1}{k} \right) \right),
  \end{equation*}
  from which we deduce that, once more, the series \eqref{eq:g:x} converges if
  $\Re x > - 1$. Consequently, we expect that the truncation
  \begin{equation*}
    C_{\boldsymbol{F}} (x ; N) = \sum_{k = 0}^N (- 1)^k 8^{x - k} \binom{x}{k}
     C_{\boldsymbol{A}} (k),
  \end{equation*}
  as $N \rightarrow \infty$, has an asymptotic expansion of the form
  \begin{equation*}
    C_{\boldsymbol{F}} (x ; N) = C_{\boldsymbol{F}} (x) + \frac{b_1 (x)}{N^{x +
     1}} + \frac{b_2 (x)}{N^{x + 2}} + \ldots
  \end{equation*}
  Using this assumption, we can speed up the convergence of $C_{\boldsymbol{F}}
  (- 1 / 2 ; N)$ by considering the sequence $c_n = C_{\boldsymbol{F}} (- 1 / 2
  ; n^2)$ and approximating its limit via the differences $(S_n - 1)^m n^m c_n
  / m!$ for suitable choices of $m$ and $n$. This allows us to compute $C_{\boldsymbol{F}}(- 1/2)$
  to, say, 50 decimal places. Namely,
  \begin{equation*}
    C_{\boldsymbol{F}} (-\tfrac{1}{2}) =
     0.50546201971732600605200405322714025998512901481742 \ldots
  \end{equation*}
  This allowed us to numerically discover \eqref{eq:C2:L} for sequence $\boldsymbol{F}$.
  For comparison, summing the first $100, 000$
  terms of the series only produces three correct digits.
\end{example}

\section{Cellular integrals}\label{sec:cellular}

Recently, Brown \cite{brown-apery} introduced a program where period integrals
on the moduli space $\mathcal{M}_{0,N}$ of curves of genus 0 with $N$ marked points
play a central role in understanding irrationality proofs of values of the Riemann zeta function. The idea is to associate a rational function $f_{\sigma}$ and a differential $(N-3)$-form $\omega_{\sigma}$ to a given permutation $\sigma = \sigma_N$ on $\{1, 2, \dotsc, N\}$. Consider the cellular integral 

\begin{equation*}
I_{\sigma}(n) := \int_{S_N} f_{\sigma}^n \,\, \omega_{\sigma},
\end{equation*}
where 
\begin{equation*}
S_{N} = \{(t_1, \dotsc, t_{N-3}) \in \mathbb{R}^{N-3} : 0 < t_1 < \dotsc < t_{N-3} < 1 \}.
\end{equation*}
By \cite[Corollary~8.2]{brown-apery}, $I_{\sigma}(n)$ is a $\mathbb{Q}$-linear combination of multiple zeta values of weight less than or equal to $N-3$. Suppose that this linear combination is of the form $A_{\sigma_N}(n) \zeta_\sigma(N-3)$, with $A_{\sigma_N}(n) \in \mathbb{Q}$, plus a combination of multiple zeta values of weight less than $N-3$. We then say that $A_{\sigma}(n) = A_{\sigma_N}(n)$ is the \emph{leading coefficient} of the cellular integral $I_{\sigma}(n)$. For example, if $N=5$, then $\sigma_5=(1,3,5,2,4)$ is the unique convergent permutation, $I_{\sigma_5}(n)$ recovers Beukers' integral for $\zeta(2)$ \cite{beukers79} and the leading coefficients $A_{\sigma_5}(n)$ are the Ap{\'e}ry numbers $C_{\boldsymbol{D}} (n)$ in \eqref{eq:apery2}.

In \cite{mos-brown}, an explicit family $\sigma_N$ of convergent configurations for odd $N \geq 5$ is constructed such that the leading coefficients $A_{\sigma_N} (n)$ are powers of the Ap\'ery numbers $C_{\boldsymbol{D}} (n)$,
that is,
\begin{equation}
  A_{\sigma_N} (n) = C_{\boldsymbol{D}} (n)^{(N - 3) / 2}. \label{eq:Asigma}
\end{equation}
The first main result in \cite{mos-brown} extends Theorem \ref{thm:C2:mod} for sequence $\boldsymbol{D}$ to a supercongruence for all odd weights greater than or equal to 3. Specifically, for odd $k= N-2 \geq 3$, consider the binary theta series
\begin{equation}
  f_k (\tau) = \frac{1}{4}  \sum_{(n, m) \in \mathbb{Z}^2} (- 1)^{m (k - 1) /
  2} (n - i m)^{k - 1} q^{n^2 + m^2} =: \sum_{n \geq 1} \gamma_k (n) q^n. \label{eq:fk}
\end{equation}

\begin{theorem}[{\cite[Theorem 1.1]{mos-brown}}]
  \label{thm:mos:odd}For each odd integer $N \geq 5$, let $A_{\sigma_N}(n)$ and $f_k (\tau)$ be as in \eqref{eq:Asigma} and \eqref{eq:fk}, respectively. Then, for all primes $p \geq 5$,
  \begin{equation}
    A_{\sigma_N} (\tfrac{p-1}{2}) \equiv \gamma_{k} (p)
    \quad (\operatorname{mod} p^2) . \label{eq:mos:odd:modcong2}
  \end{equation}
\end{theorem}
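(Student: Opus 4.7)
The plan is to combine two ingredients: the known mod $p^2$ supercongruence for the second Apéry sequence $C_{\boldsymbol{D}}$, and the CM structure of the binary theta series $f_k$, both attached to the field $\mathbb{Q}(i)$.

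First I would recall Ahlgren's supercongruence \cite{ahlgren-sc},
\[
C_{\boldsymbol{D}}(\tfrac{p-1}{2}) \equiv \gamma_3(p) \pmod{p^2},
\]
which holds for all primes $p \geq 5$ (here $\gamma_3(p)$ is the $p$th Fourier coefficient of the weight-$3$ CM newform $f_3(\tau) = \eta(4\tau)^6 = f_{\boldsymbol{D}}(\tau)$ of Table~\ref{tbl:sporadic2}). Setting $t := (N-3)/2 = (k-1)/2$, the identity \eqref{eq:Asigma} reads $A_{\sigma_N}(\tfrac{p-1}{2}) = C_{\boldsymbol{D}}(\tfrac{p-1}{2})^t$. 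Because $p^2 \mid a - b$ implies $p^2 \mid a^t - b^t$ for integers $a, b, t$, Ahlgren's congruence lifts to
\[
A_{\sigma_N}(\tfrac{p-1}{2}) \equiv \gamma_3(p)^t \pmod{p^2},
\]
so the theorem reduces to the purely modular identity $\gamma_3(p)^t \equiv \gamma_k(p) \pmod{p^2}$.

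To establish this, I would exploit the CM description of $f_k$. When $p \equiv 3 \pmod{4}$, there is no representation $p = n^2 + m^2$, so $\gamma_k(p) = 0$ for every odd $k \geq 3$, and both sides vanish. When $p \equiv 1 \pmod{4}$, factor $p = \pi\bar\pi$ in $\mathbb{Z}[i]$ with $\pi$ primary (i.e.\ $\pi \equiv 1 \pmod{(1+i)^3}$), and reorganize the double sum in \eqref{eq:fk} as a sum over Gaussian integers $\alpha = n - im$. A direct computation identifies $f_k$ as the Hecke theta series attached to the Gr\"ossencharakter of $\mathbb{Q}(i)$ of infinity type $k-1$ whose finite part is trivial on primary generators; in particular, for the same primary $\pi$ and every odd $k \geq 3$,
\[
\gamma_k(p) = \pi^{k-1} + \bar\pi^{k-1}.
\]
The binomial expansion
\[
\gamma_3(p)^t = (\pi^2 + \bar\pi^2)^t = \pi^{2t} + \bar\pi^{2t} + \sum_{j=1}^{t-1} \binom{t}{j} \pi^{2j} \bar\pi^{2(t-j)}
\]
then finishes the job: the first two terms give $\gamma_k(p)$, while each remaining term is divisible by $(\pi\bar\pi)^{2\min(j, t-j)} = p^{2\min(j, t-j)}$, hence by $p^2$ since $\min(j, t-j) \geq 1$ for $1 \leq j \leq t-1$ (the pairing of $j$ with $t-j$ also ensures the middle sum is real, as it must be).

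The main obstacle is bookkeeping: pinning down the CM identification of $f_k$ precisely enough to exhibit a single primary generator $\pi$ that works simultaneously for all odd $k \geq 3$, and matching this against the normalization of $f_3 = \eta(4\tau)^6$ used in Ahlgren's supercongruence so that the two congruences align. This is routine once one unpacks the twist $(-1)^{m(k-1)/2}$ in \eqref{eq:fk} in terms of the action on Gaussian integers of norm $p$, though some care is needed at small primes (which is presumably why the theorem requires $p \geq 5$). Once this identification is in place, the rest of the proof is the one-line binomial expansion above combined with Ahlgren's supercongruence.
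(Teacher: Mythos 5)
The paper does not actually prove this statement---it is imported verbatim from \cite{mos-brown}---and your argument is essentially the one given there: reduce via \eqref{eq:Asigma} and Ahlgren's mod-$p^2$ supercongruence for $C_{\boldsymbol{D}}$ to the identity $\gamma_3(p)^{(k-1)/2} \equiv \gamma_k(p) \pmod{p^2}$, then use the CM description of $f_k$ together with the binomial theorem. The identification $\gamma_k(p) = \pi^{k-1} + \bar\pi^{k-1}$ that you defer as bookkeeping does check out directly from \eqref{eq:fk}: summing over the eight Gaussian integers of norm $p$, the twist $(-1)^{m(k-1)/2}$ exactly compensates the factor $i^{k-1} = -1$ contributed by the associates when $k \equiv 3 \pmod 4$, and is trivial alongside $i^{k-1} = 1$ when $k \equiv 1 \pmod 4$, so a single Gaussian prime $\pi$ with odd real part works simultaneously for every odd $k \geq 3$. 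Your proof is correct and coincides with the cited one.
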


Using the interpolation \eqref{eq:apery2:x} for $C_{\boldsymbol{D}} (n)$ and \eqref{eq:Asigma}, we have the following analogue of \eqref{eq:apery3:L} for all odd $N \geq 5$.

\begin{theorem}
  \label{thm:mos:odd:L}Let $N \geq 5$ be an odd positive integer, $k=N-2$ and $f_k(\tau)$ be as in \eqref{eq:fk}. Then,
  \begin{equation}
    A_{\sigma_N} (-\tfrac{1}{2}) = \frac{\alpha_k}{\pi^{k - 1}}
    L (f_k, k - 1), \label{eq:mos:odd:L}
  \end{equation}
  where $\alpha_k$ is an explicit rational number given as follows:
  \begin{equation}
    \alpha_k = 2^{(k + 1) / 2} (k - 2) \left\{ \begin{array}{ll}
      2 / r_{(k - 1) / 2}, &\quad \text{if $k \equiv 1 \pmod{4}$},\\
      1 / s_{(k - 1) / 2}, &\quad \text{if $k \equiv 3 \pmod{4}$}.
    \end{array} \right. \label{eq:mos:odd:L:a}
  \end{equation}
  Here, $r_n$ is defined by $r_2 = 1 / 5$, $r_3 = 0$ and 
  \begin{equation}
    (2 n + 1) (n - 3) r_n = 3 \sum_{k = 2}^{n - 2} r_k r_{n - k}
    \label{eq:mos:odd:L:a:rec}
  \end{equation}
 for $n \geq 4$, and $s_n$ is defined by $s_1 = 1 / 4$, $s_2 = 11 / 80$, $s_3 = 1 / 32$ and the same recursion \eqref{eq:mos:odd:L:a:rec} for $n \geq 4$.
\end{theorem}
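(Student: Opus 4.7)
The plan is to combine two ingredients. The first is the identity $A_{\sigma_N}(x) = C_{\boldsymbol{D}}(x)^{(k-1)/2}$, which extends \eqref{eq:Asigma} to complex $x$ by analytic continuation (both sides being holomorphic in $x$ and agreeing on nonnegative integers). Specializing to $x = -1/2$ and applying the evaluation $C_{\boldsymbol{D}}(-\tfrac{1}{2}) = \Gamma^4(\tfrac{1}{4})/(4\pi^3)$ obtained in the proof of the case $\boldsymbol{D}$ of Theorem~\ref{thm:C2:L}, one immediately gets the closed form
\begin{equation*}
A_{\sigma_N}\!\left(-\tfrac{1}{2}\right) = \frac{\Gamma^{2(k-1)}(\tfrac{1}{4})}{2^{k-1}\,\pi^{3(k-1)/2}}.
\end{equation*}
This reduces the theorem to an explicit closed-form evaluation of $L(f_k, k-1)$ in the same gamma-and-$\pi$ shape.

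The second ingredient is the CM nature of $f_k$. Since $f_k$ is the weight-$k$ binary theta series on the Gaussian lattice twisted by a Grossencharacter of infinity type $k-1$, its $L$-function is a Hecke $L$-series on $\mathbb{Q}(i)$, and Damerell's theorem (already invoked in the proof of Theorem~\ref{thm:C2:L} case $\boldsymbol{F}$) guarantees that $L(f_k, k-1)/\pi^{k-1}$ is an algebraic multiple of $\bigl(\Gamma(\tfrac{1}{4})^2/\sqrt{\pi}\bigr)^{k-1}$. Concretely, I would write
\begin{equation*}
L(f_k, k-1) = \frac{1}{4}\sum_{(n,m)\neq (0,0)} \frac{(-1)^{m(k-1)/2}}{(n+im)^{k-1}}
\end{equation*}
and evaluate this twisted Eisenstein-type lattice sum at the CM point $\tau=i$ via the Chowla--Selberg formula, obtaining $L(f_k, k-1) = c_k\,\Gamma^{2(k-1)}(\tfrac{1}{4})/\pi^{(k-1)/2}$ for an explicit rational $c_k$. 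Matching this with the formula for $A_{\sigma_N}(-1/2)$ above then produces \eqref{eq:mos:odd:L} with $\alpha_k$ as an elementary rational expression in $c_k$.

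To cast the constants in the recursive form \eqref{eq:mos:odd:L:a:rec}, I would exploit the graded ring structure of holomorphic modular forms, $\mathbb{C}[E_4, E_6]$, together with the vanishing $E_6(i) = 0$. This forces every holomorphic Eisenstein-type CM value at $\tau = i$ to be a polynomial in the single quantity $E_4(i)$, with coefficients satisfying a quadratic recursion inherited from Ramanujan's differential relations among $E_2, E_4, E_6$. The two parities $k\equiv 1, 3 \pmod{4}$ correspond to two different classes of Grossencharacters and hence account for the two separate sequences $r_n$ and $s_n$, each with its own seed values.

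The main obstacle will be matching normalizations precisely: identifying the quadratic recursion produced by Ramanujan's system with the specific recursion \eqref{eq:mos:odd:L:a:rec}, and verifying that the given initial conditions $r_2 = 1/5$, $r_3 = 0$, $s_1 = 1/4$, $s_2 = 11/80$, $s_3 = 1/32$ are the correct seeds. This can be pinned down by computing $L(f_k, k-1)$ explicitly for small odd $k$ (the case $k=3$ is already contained in Theorem~\ref{thm:C2:L} case $\boldsymbol{D}$, and $k=5,7,9$ cover both parity classes and all required seeds) and then propagating to larger $k$ by induction.
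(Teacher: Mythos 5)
Your first ingredient is exactly the paper's: reduce via \eqref{eq:Asigma} to $C_{\boldsymbol{D}}(-\tfrac12)^{(k-1)/2}$, insert the value $\Gamma^4(\tfrac14)/(4\pi^3)$ from \eqref{eq:apery2:L:1}, and match against a closed form for the lattice sum $L(f_k,k-1)=\tfrac14\sum_{(n,m)\neq(0,0)}(-1)^{m(k-1)/2}(n+im)^{-(k-1)}$. The second half is also the paper's strategy in substance (twisted Eisenstein values at CM points, a quadratic recursion coming from the finite generation of the ring of modular forms, explicit seeds, induction), though the paper does not route through Damerell or Chowla--Selberg here: it uses the classical recursion $(4n^2-1)(n-3)G_{2n}=3\sum_{k=2}^{n-2}(2k-1)(2n-2k-1)G_{2k}G_{2(n-k)}$, which after setting $H_n=(2n-1)G_{2n}$ is literally \eqref{eq:mos:odd:L:a:rec}, together with Hurwitz-type evaluations of $G_4$ and $G_6$ at $i$ and $2i$ in terms of the lemniscate constant.

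There is one structural misstatement in your sketch that you would have to repair. The dichotomy between $r_n$ and $s_n$ is not a matter of ``two classes of Grossencharacters,'' and it is not true that everything reduces to values at $\tau=i$ governed by $E_4(i)$ alone. The sign $(-1)^{m(k-1)/2}$ is trivial when $k\equiv 1\pmod 4$, giving $L(f_k,k-1)=\tfrac14 G_{k-1}(i)$, but when $k\equiv 3\pmod 4$ it equals $(-1)^m$ and the sum becomes $\tfrac14\bigl(2G_{k-1}(2i)-G_{k-1}(i)\bigr)=\tfrac12 G_{k-1}(2i)$, since $G_{k-1}(i)=0$ for $k-1\equiv 2\pmod 4$. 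So the $k\equiv 3\pmod 4$ branch lives at $\tau=2i$, where $G_6(2i)=\omega^6/160\neq 0$; the algebra of values there is genuinely generated by two quantities, which is why the paper needs the seeds $s_1,s_2,s_3$ (versus only $r_2,r_3$ with $r_3=0$ at $\tau=i$). Your plan to compute $k=5,7,9$ would eventually surface this, and the recursion (being an identity of modular forms) propagates at $\tau=2i$ just as at $\tau=i$, so the argument is recoverable; but as written the claimed mechanism for the two sequences is wrong.
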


\begin{proof}[Proof of Theorem \ref{thm:mos:odd:L}]
  Since $f_3 (\tau) = \eta (4 \tau)^6$, the case $N = 5$ is
  \eqref{eq:apery2:L}. Thus, we assume $N > 5$. As a consequence of \eqref{eq:apery2:L:1} and \eqref{eq:Asigma}, we have
  \begin{equation}
    A_{\sigma_N} (-\tfrac{1}{2}) = \left(\frac{\Gamma^2 (\tfrac{1}{4})}{2 \pi^{3 / 2}} \right)^{N - 3} = \left(\frac{\sqrt{2} \omega}{\pi} \right)^{k - 1}, \label{eq:mos:odd:L:w}
  \end{equation}
  where
  \begin{equation*}
    \omega = 2 \int_0^1 \frac{\md x}{\sqrt{1 - x^4}} = \frac{\Gamma^2
     (\tfrac{1}{4})}{2 \sqrt{2 \pi}}
  \end{equation*}
  is the lemniscate constant. On the other hand, it follows from the
  representation \eqref{eq:fk} that
  \begin{eqnarray*}
    L (f_k, k - 1) & = & \frac{1}{4}  \sum_{(n, m) \neq (0, 0)} (- 1)^{m (k -
    1) / 2} \frac{(n - i m)^{k - 1}}{(n^2 + m^2)^{k - 1}}\\
    & = & \frac{1}{4}  \sum_{(n, m) \neq (0, 0)} (- 1)^{m (k - 1) / 2}
    \frac{1}{(n + i m)^{k - 1}} .
  \end{eqnarray*}
  In other words, these $L$-values are values of the Eisenstein series
  \begin{equation*}
    G_{\ell} (\tau) = \sum_{(n, m) \neq (0, 0)} \frac{1}{(n + m \tau)^{\ell}}
  \end{equation*}
  of even weight $\ell$. Specifically, since
  \begin{equation*}
    2 G_{\ell} (2 \tau) - G_{\ell} (\tau) = \sum_{(n, m) \neq (0, 0)}
     \frac{(- 1)^m}{(n + m \tau)^{\ell}},
  \end{equation*}
  we have
  \begin{equation*}
    L (f_k, k - 1) = \frac{1}{4} \left\{ \begin{array}{ll}
       G_{k - 1} (i), & \text{if $k \equiv 1 \pmod{4}$},\\
       2 G_{k - 1} (2 i) - G_{k - 1} (i), & \text{if $k \equiv 3 \pmod{4}$} .
     \end{array} \right.
  \end{equation*}
  We note that, if $k \equiv 3 \pmod{4}$, then $G_{k - 1}
  (i) = 0$ because, writing $k = 4 \ell + 3$,
  \begin{align*}
    G_{k - 1} (i) &= \sum_{(n, m) \neq (0, 0)} \frac{1}{(n + m i)^{4 \ell + 2}} \\
    &= \sum_{(n, m) \neq (0, 0)} \frac{1}{i^{4 \ell + 2} (m - n i)^{4 \ell + 2}} = - G_{k - 1} (i) .
  \end{align*}
  For $n \geq 4$, we have (see \cite[Theorem~1.13]{apostol-mf})
  \begin{equation*}
    (4 n^2 - 1) (n - 3) G_{2 n} = 3 \sum_{k = 2}^{n - 2} (2 k - 1) (2 n - 2 k
     - 1) G_{2 k} G_{2 (n - k)},
  \end{equation*}
  which, upon setting $H_k = (2 k - 1) G_{2 k}$, takes the simplified form
  \begin{equation}
    (2 n + 1) (n - 3) H_n = 3 \sum_{k = 2}^{n - 2} H_k H_{n - k} .
    \label{eq:eisenstein:rec:H}
  \end{equation}
  In terms of the functions $H_k$, we have
  \begin{equation*}
    L (f_k, k - 1) = \frac{1}{4 (k - 2)} \left\{ \begin{array}{ll}
       H_{2 \ell} (i), & \text{if $k = 4 \ell + 1$},\\
       2 H_{2 \ell + 1} (2 i), & \text{if $k = 4 \ell + 3$} .
     \end{array} \right.
  \end{equation*}
  Note that the required values of $H_k (\tau)$ at $\tau = i$ and $\tau = 2 i$
  are determined by the recursive relation \eqref{eq:eisenstein:rec:H} once we
  know the initial cases $k = 2$ and $k = 3$. It is shown, for instance, in
  \cite[Theorem~6]{lmp-hurwitz} that
  \begin{equation*}
    H_2 (i) = 3 G_4 (i) = \frac{\omega^4}{5},
  \end{equation*}
  and our earlier discussion implies $H_3 (i) = 5 G_6 (i) = 0$. Similarly, one
  shows that
  \begin{equation*}
    H_2 (2 i) = 3 G_4 (2 i) = \frac{11 \omega^4}{80}, \quad H_3 (2 i) = 5 G_6
     (2 i) = \frac{\omega^6}{32} .
  \end{equation*}
  In light of these initial values, the recurrence \eqref{eq:eisenstein:rec:H}
  implies that, for $n \geq 2$, the values $H_n (i)$ and $H_n (2 i)$ are
  rational multiples of $\omega^{2 n}$. Moreover, the rational factors are
  given by the sequences $r_n$ and $s_n$:
  \begin{equation*}
    r_n = \frac{H_n (i)}{\omega^{2 n}}, \quad s_n = \frac{H_n (2
     i)}{\omega^{2 n}} .
  \end{equation*}
  Thus,
  \begin{equation*}
    L (f_k, k - 1) = \frac{\omega^{k - 1}}{4 (k - 2)} \left\{
     \begin{array}{ll}
       r_{2 \ell}, & \text{if $k = 4 \ell + 1$},\\
       2 s_{2 \ell + 1}, & \text{if $k = 4 \ell + 3$},
     \end{array} \right.
  \end{equation*}
  and the claim then follows from comparison with \eqref{eq:mos:odd:L:w}.
\end{proof}

\begin{remark}
  Let us indicate that the rational numbers featuring in
  Theorem~\ref{thm:mos:odd:L} are arithmetically interesting in their own
  right, and analogous to Bernoulli numbers.
  The values $G_{4 \ell} (i)$ were first explicitly evaluated by Hurwitz
  \cite{hurwitz-lem} (see \cite{lmp-hurwitz} for a modern account), who
  showed that
  \begin{equation}
    G_{4 \ell} (i) = \sum_{(n, m) \neq (0, 0)} \frac{1}{(n + i m)^{4 \ell}} =
    \frac{(2 \omega)^{4 \ell}}{(4 \ell) !} E_{\ell}, \label{eq:hurwitz}
  \end{equation}
  where the $E_{\ell}$ are positive rational numbers characterized by $E_1 = 1
  / 10$ and the recurrence
  \begin{equation*}
    E_n = \frac{3}{(2 n - 3) (16 n^2 - 1)} \sum_{k = 1}^{n - 1} (4 k - 1) (4
     n - 4 k - 1) \binom{4 n}{4 k} E_k E_{n - k} .
  \end{equation*}
  In terms of the numbers $r_n$ defined in Theorem~\ref{thm:mos:odd:L}, we
  have
  \begin{equation*}
    E_n = \frac{(4 n) !}{2^{4 n}} \frac{r_{2 n}}{4 n - 1} .
  \end{equation*}
  Equation~\eqref{eq:hurwitz}, defining the Hurwitz numbers $E_{\ell}$, can be
  seen as an analog of
  \begin{equation*}
    \sum_{n \neq 0} \frac{1}{n^{2 \ell}} = \frac{(2 \pi)^{2 \ell}}{(2 \ell)
     !} B_{\ell}
  \end{equation*}
  characterizing the Bernoulli numbers $B_{\ell}$. In other words, in the
  theory of Gaussian integers the Hurwitz numbers $E_{\ell}$ play a role
  comparable to that played by the Bernoulli numbers for the usual integers.
  That this analogy extends much further, including to the theorem of von
  Staudt--Clausen, is beautifully demonstrated by Hurwitz
  \cite{hurwitz-lem}.
\end{remark}

\begin{example}
  Let us make the case $N=7$ of Theorem~\ref{thm:mos:odd:L} explicit. The leading coefficients $A_{\sigma_7} (n)$ are the
  squares of the Ap\'ery numbers $C_{\boldsymbol{D}} (n)$ and the modular form
  $f_5 (\tau)$ can alternatively be expressed as
  \begin{equation*}
    f_5 (\tau) = \eta (\tau)^4 \eta (2 \tau)^2 \eta (4 \tau)^4 .
  \end{equation*}
  The Zagier-type $L$-value evaluation proven in Theorem~\ref{thm:mos:odd:L}
  is
  \begin{equation*}
    A_{\sigma_7} (-\tfrac{1}{2}) = \frac{240}{\pi^2} L (f_5, 4).
  \end{equation*}
  It is observed in \cite{rwz-ell} that this and many other $L$-values are
  naturally expressed in terms of integrals of the complete elliptic integral
  $K$; for instance,
  \begin{equation*}
    L (f_5, 4) = \frac{1}{30} \int_0^1 K' (k)^3 \md k = \frac{1}{9}
     \int_0^1 K (k)^3 \md k.
  \end{equation*}
\end{example}

\begin{example}
  The values of the first several $\alpha_k$ in Theorem~\ref{thm:mos:odd:L}
  are $\alpha_3 = 16$, $\alpha_5 = 240$, $\alpha_7 = 2560$, $\alpha_9 = 33600$, $\alpha_{11} = 491520$, $\alpha_{13} = 6864000$ and $\alpha_{15} = \frac{1022361600}{11}$.
\end{example}

Let $L^{\ast} (f, s) = (2 \pi)^{- s} \Gamma (s) L (f, s)$ be the normalized $L$-function for $f$. It follows from the work of Eichler, Shimura and Manin on period polynomials (for example, see \cite{shimura2}) that the critical $L$-values
$L^{\ast} (f, s)$ for odd $s$ (as well as those for even $s$) are algebraic
multiples of each other. Moreover, if $f$ has odd weight $k$, then by virtue
of the functional equation all critical $L$-values $L^{\ast} (f, s)$ are
algebraic multiples of each other. In particular, it follows that
\eqref{eq:mos:odd:L} can be rewritten as
\begin{equation*}
  A_{\sigma_N} (-\tfrac{1}{2}) = \beta_k \frac{L (f_k, 2)}{\pi^2}
\end{equation*}
for some algebraic numbers $\beta_k$. In fact, it appears that the $\beta_k$'s
are rational numbers. 

\begin{example}
  Numerically, the first several values of $\beta_k$ are $\beta_3 =
  16$, $\beta_5 = 48$, $\beta_7 = 4$, $\beta_9 = 14$, $\beta_{11} =
     \frac{1}{33}$, $\beta_{13} = \frac{11}{18}$, $\beta_{15} =
     \frac{1}{33156}$. These values, as well as the relations indicated in Example \ref{eg:mos:fk:lquot}, may in principle be rigorously obtained using, for instance, Rankin's method \cite{shimura1}.
\end{example}

\begin{example}
  \label{eg:mos:fk:lquot}As indicated above, all critical $L$-values $L^{\ast} (f_k, s)$ are algebraic
  multiples of each other. In fact, numerical computations suggest that all
  critical $L$-values are rationally related. The first few cases
  are:
  \begin{eqnarray*}
    L (f_5, 4) & = & \frac{2 \pi}{5} L (f_5, 3) = \frac{\pi^2}{5} L (f_5, 2) =
    \frac{\pi^3}{6} L (f_5, 1),\\
    L (f_7, 6) & = & \frac{3 \pi}{10} L (f_7, 5) = \frac{3 \pi^2}{40} L (f_7,
    4) = \frac{\pi^3}{80} L (f_7, 3) = \frac{\pi^4}{640} L (f_7, 2)\\
    & = & \frac{\pi^5}{3840} L (f_7, 1),\\
    L (f_9, 8) & = & \frac{3 \pi}{10} L (f_9, 7) = \frac{3 \pi^2}{35} L (f_9,
    6) = \frac{4 \pi^3}{175} L (f_9, 5) = \frac{\pi^4}{175} L (f_9, 4)\\
    & = & \frac{\pi^5}{700} L (f_9, 3) = \frac{\pi^6}{2400} L (f_9, 2) =
    \frac{\pi^7}{5040} L (f_9, 1).
  \end{eqnarray*}
We thank Yifan Yang for pointing out that one can prove the relation
\begin{equation*}
L(f_5, 4) = \frac{\pi^2}{5} L(f_5,2)  
\end{equation*}
using Theorem 2.3 in \cite{fy}.
\end{example}

\section{Outlook}\label{sec:outlook}

There are numerous directions for future study. First, motivated by Beukers' and Zagier's numerical investigation of \eqref{eq:rec2-abc}, Almkvist, Zudilin \cite{az-de06} and Cooper \cite{cooper-sporadic} searched for parameters $(a,b,c,d)$ such that the three-term relation
\begin{equation}
(n+1)^3 u_{n+1} = (2n+1)(an^2 + an + b)u_n - n(cn^2 + d)u_{n-1},
\label{eq:recabcd}
\end{equation}
with initial conditions $u_{-1}=0$, $u_0=1$, produces only integer solutions. For $(a,b,c,d) = (17,5,1,0)$, we obtain the Ap{\'e}ry numbers $A(n)$. In total, there are nine sporadic cases for \eqref{eq:recabcd}. It is not currently known if each of these cases has an interpolated version which is related (similar to  \eqref{eq:apery3:L}) to the critical $L$-value of a modular form of weight $4$. Second, we echo the lament in \cite{osz-6f5} concerning the lack of algorithmic approaches in directly proving congruences, such as \eqref{eq:BC}, between binomial sums. Third, can one extend the results in \cite{fy} to verify the cases in Example \ref{eg:mos:fk:lquot} and, more generally, find an explicit formula for the ratio $L(f_k,k-1)/L(f_k,2)$ in terms of a rational number and a power of $\pi$? Fourth, in the context of Section \ref{sec:cellular}, a supercongruence (akin to \eqref{eq:mos:odd:modcong2}) has been proven in \cite{mos-brown} between the leading coefficient 
\begin{equation*}
    A_{\sigma_8} (n) = \sum_{\substack{k_1, k_2, k_3, k_4 = 0 \\ k_1 + k_2 = k_3 + k_4}}^{n} \prod_{i = 1}^4 \binom{n}{k_i} \binom{n + k_i}{k_i} 
    \label{eq:A86}
\end{equation*}
and $\eta(2\tau)^{12}$, the unique newform in $S_6(\Gamma_{0}(4))$. Does there exist a version of Theorem \ref{thm:mos:odd:L} in this case?
Fifth, Zudilin \cite{zudilin-cy-hyp} recently considered periods of certain instances of rigid
Calabi--Yau manifolds, which are expressed in terms of hypergeometric
functions. In these instances, he conjecturally indicated a relation between special bases
of the hypergeometric differential equations and all critical $L$-values of
the corresponding modular forms (these relations include those that we
observed during the proof of Theorem~\ref{thm:C2:L}). From our present
perspective of interpolations of sequences, can one similarly engage all of
the critical $L$-values?
Finally, it would be highly desirable to have a more conceptual understanding of the connection between these (and potentially other) interpolations and $L$-values.

\section*{Acknowledgements} The first author would like to thank the Hausdorff Research Institute for Mathematics in Bonn, Germany for their support as this work began during his stay from January 2--19, 2018 as part of the Trimester Program ``Periods in Number Theory, Algebraic Geometry and Physics". He also thanks Masha Vlasenko for her support and encouragement during the initial stages of this project.
The authors are particularly grateful to Wadim Zudilin for sharing his proof of Theorem~\ref{thm:C2:L} for sequence $\boldsymbol{F}$.

\bibliography{references}
\bibliographystyle{abbrv}

\end{document}